\renewcommand*{\backref}[1]{}
\renewcommand*{\backrefalt}[4]{[{\footnotesize%
		\ifcase #1 Not cited.%
		\or Cited on page~#2.%
		\else Cited on pages #2.%
		\fi%
	}]}
\newcommand{\RR}{\mathbb{R}}
\newcommand{\ZZ}{\mathbb{Z}}
\newcommand{\Sb}{\mathbb{S}}
\newcommand{\rd}{\mathrm{d}}
\def\trans{\text{\tiny\sf T}}
\DeclareMathOperator{\cn}{cn}
\DeclarePairedDelimiter\paren{\lparen}{\rparen}
\DeclarePairedDelimiterX{\inpr}[2]{\langle}{\rangle}{{#1},{#2}}
\DeclarePairedDelimiterX{\L2ip}[2]{\langle}{\rangle_{L^2}}{{#1},{#2}}
\DeclarePairedDelimiterX{\setI}[2]{\{}{\}}{\,{#1}\ \delimsize| \ {#2}\,}
\DeclarePairedDelimiter{\setE}{\{}{\}}
\DeclarePairedDelimiter{\abs}{|}{|}
\DeclarePairedDelimiter{\norm}{\|}{\|}
\definecolor{mygreen}{rgb}{0,0.5,0}
\definecolor{myred}{rgb}{0.8,0,0}
\definecolor{myblue}{rgb}{0,0,0.5}
\newtheorem{theorem}{Theorem}[section]
\newtheorem{proposition}{Proposition}[section]
\newtheorem{definition}{Definition}[section]
\newtheorem{remark}{Remark}[section]
\crefname{theorem}{Theorem}{Theorems}
\crefname{lemma}{Lemma}{Lemmas}
\crefname{proposition}{Proposition}{Propositions}
\crefname{corollary}{Corollary}{Corollaries}
\crefname{definition}{Definition}{Definitions}
\crefname{example}{Example}{Examples}
\crefname{remark}{Remark}{Remarks}
\crefname{figure}{Figure}{Figures}
\crefname{table}{Table}{Tables}
\crefname{section}{Section}{Sections}
\crefname{subsection}{Section}{Sections}
\newcounter{NN}
\newcounter{NR}
\newcounter{ncount}
\newcounter{scount}
\def\black{\setcounter{NN}{1}}
\def\white{\setcounter{NN}{2}}
\def\change{\ifthenelse{\value{NN}=1}%
{\setcounter{NN}{2}}%
{\setcounter{NN}{1}}}
\def\metrics#1#2#3#4{%
\def\treesize{#1}\def\thick{#2}\def\rad{#3}%
\pgfmathparse{#3-#4}\let\smallrad=\pgfmathresult}
\def\vertex#1#2{%
\addtocounter{ncount}{1}
\setcounter{scount}{\thencount}
\addtocounter{scount}{-#1}
\ifthenelse{\first=1}{%
\node (\thencount) at ($(\thescount) + ( #2,1)$){};
\draw (\thencount)--(\thescount);
\fill (\thencount) circle;
}%
{
\ifthenelse{\value{NN}=2}{\fill[white,radius=\smallrad pt] (\thencount) circle;}{}
}}
\def\tree#1{%
\ifthenelse{\value{NN}=0}{}{\setcounter{NR}{\value{NN}}}
\begin{tikzpicture}[x=\treesize mm,y=\treesize mm,radius=\rad pt,line width=\thick pt,inner sep=0,baseline=-0.02cm]
\node (0) at (0,0) {}; \fill (0) circle;
\setcounter{NN}{\value{NR}}
\setcounter{ncount}{0}
\gdef\first{1}
#1
\setcounter{NN}{\value{NR}} 
\gdef\first{2}
\ifthenelse{\value{NR}=2}{\fill[white,radius=\smallrad pt] (0) circle;}{}
\setcounter{ncount}{0}
#1
\setcounter{NN}{0}
\end{tikzpicture}}
\renewcommand{\frame}[3][white]{\draw[#1,thin]  (#2) rectangle (#3);}
\title[High-order linearly implicit schemes conserving quadratic invariants]{High-order linearly implicit schemes conserving\\ quadratic invariants}
\begin{document}

\author{Shun Sato}
\address{Department of Mathematical Informatics,
        Graduate School of Information Science and Technology,
        The University of Tokyo, Tokyo, Japan.}
\email{shun@mist.i.u-tokyo.ac.jp}

\author{Yuto Miyatake}
\address{Cybermedia Cneter,
        Osaka University, Osaka, Japan.}
        
\author{John C. Butcher}
\address{Department of Mathematics,
        The University of Auckland, Auckland, New Zealand.}

\metrics{2.5}{1}{2}{0.8}

\begin{abstract}
In this paper, we propose linearly implicit and arbitrary high-order conservative numerical schemes for ordinary differential equations with a quadratic invariant. 
Many differential equations have invariants, and numerical schemes for preserving them have been extensively studied. 
Since linear invariants can be easily kept after discretisation, quadratic invariants are essentially the simplest ones. 
Quadratic invariants are important objects that appear not only in many physical examples but also in the computationally efficient conservative schemes for general invariants such as scalar auxiliary variable approach, which have been studied in recent years. 
It is known that quadratic invariants can be kept relatively easily compared to general invariants, and indeed can be preserved by canonical Runge--Kutta methods. 
However, there is no unified method for constructing linearly implicit and high order conservative schemes. 
In this paper, we construct such schemes based on canonical Runge--Kutta methods and prove some properties involving accuracy.
\end{abstract}
\keywords{Ordinary differential equations; Quadratic invariants; Geometric numerical integration; Linearly implicit schemes; Canonical Runge--Kutta methods}
\subjclass{65L05 \and 65L06 \and 65P10}

\maketitle

\section{Introduction}
\label{sec:intro}

In this paper, we consider ordinary differential equations (ODEs) of the form
\begin{equation}\label{eq:grad_sys}
    \dot{y} = S(y) \nabla V(y), 
\end{equation}
where $ y : [0,T) \to \RR^d $ is a dependent variable, 
$ S : \RR^d \to \RR^{d \times d} $ is a skew-symmetric matrix function, and $ V :\RR^d \to \RR $ is a function.
In the main part of the paper, we assume that $V$ is quadratic, i.e. $  V(y) = \frac{1}{2} \inpr*{y}{Qy}$ ($ Q \in \RR^{d \times d}$ is a symmetric matrix), where $ \inpr*{\cdot}{\cdot} $ denotes the standard inner product on $ \RR^d $. 

The class of ODEs in the form~\eqref{eq:grad_sys} includes many examples from the Hamiltonian systems to spatial discretisation of the variational partial differential equations (PDEs) (see, e.g. \cite{CGMMOOQ2012}). 
The most important property of these ODEs is the conservation law with respect to $V$: 
\begin{equation}
    \frac{\rd}{\rd t} V(y(t)) = \inpr*{ \nabla V(y(t)) }{ \dot{y}(t) } = \inpr*{ \nabla V(y(t)) }{ S (y(t)) \nabla V(y(t)) } = 0, 
\end{equation}
where the last equality holds due to skew-symmetry of $S(y(t))$. 

The conservation law is often an essential property of the ODE in the form~\eqref{eq:grad_sys}, 
and the numerical schemes inheriting it have been intensively studied in the literature. 
When the invariant $V$ is linear, all Runge--Kutta methods automatically preserve the conservation law (cf. \cite{HLW2010}). 
Furthermore, some implicit Runge--Kutta methods, canonical Runge--Kutta methods, automatically preserve all quadratic invariants~\cite{C1987}. 
There are many PDEs with quadratic invariants in the form of the square of the $L^2$ norm or the Sobolev norm. 
In addition, quadratic invariants also appear frequently in the context of ODEs. 

For more general invariants, 
we need specialised numerical schemes; 
for example, the discrete gradient method~\cite{G1996,MQR1998,MQR1999} for the gradient ODEs and the discrete variational derivative method~\cite{F1999,FM1998} (see also~\cite{FM2011}) for variational PDEs. 
In addition, Cohen and Hairer~\cite{CH2011} proposed a high-order extension of the discrete gradient method. 

Since the numerical schemes inheriting quadratic or general invariants are fully implicit, several techniques have been devised to reduce the computational cost. 
For example, Besse~\cite{B2004} and Zhang, P\'{e}rez-Garc\'{\i}a, and V\'{a}zquez~\cite{ZPV1995} proposed such schemes for the nonlinear Schr\"odinger equation. 
Moreover, for polynomial invariants, 
Matsuo and Furihata~\cite{MF2001} proposed a multistep linearly implicit version of the DVDM (see also Dahlby and Owren~\cite{DO2011}). 
Recently, some methods have been proposed for constructing linearly implicit schemes using auxiliary variables:
Yang and Han~\cite{YH2017JCP} proposed the invariant energy quadratisation (IEQ) approach, and Shen, Xu, and Yang~\cite{SXY2018JCP} proposed the scalar auxiliary variable (SAV) approach.

Each of the above methods takes a different approach to reduce the computational cost, but, indeed, they actually have one thing in common: the given equation is transformed so that the invariant becomes quadratic. 
In this sense, constructing numerical schemes that preserve a quadratic invariant has implications for differential equations with a general invariant. 
Therefore, in the above existing studies, linearly implicit schemes that preserve the quadratic invariant have been proposed for each case. 
However, such time discretisation is not understood in a unified way. 

In this paper, we propose a method for constructing schemes which 
\begin{itemize}
    \item preserve a quadratic invariant (see \cref{thm:cl}),
    \item can be arbitrary high order (see \cref{thm:acc_one-step,thm:err_si_itr,thm:err_ex_itr}), and
    \item are linearly implicit
\end{itemize}
based on the canonical Runge--Kutta methods, 
where we utilise the gradient structure~\eqref{eq:grad_sys} to make schemes linearly implicit. 

We describe the relationship with a related study on dissipative systems. 
Akrivis, Li, and Li~\cite{ALL2019SISC} proposed linearly implicit and arbitrarily high order conservative numerical schemes for Allen--Cahn and Cahn--Hilliard equations based on the SAV approach. 
Their temporal discretisation can be viewed as a special case of our method (see \cref{proposed:general_predictors}). 

The remainder of the paper is organised as follows. 
In \cref{sec:pre}, we briefly review the canonical Runge--Kutta methods. 
\Cref{sec:proposed} is to describe the key idea of the proposed approach. 
In \cref{sec:itr}, we propose iterative procedure (\cref{def:si,def:ex}) and prove some properties involving the accuracy of the resulting scheme (\cref{thm:err_si_itr,thm:err_ex_itr}). 
The proposed schemes are numerically examined in \cref{sec:ne}. 

\section{Preliminaries}
\label{sec:pre}

For the autonomous system
\begin{equation}
    \dot{y} = f(y),
\end{equation}
general Runge--Kutta methods computing an approximation $ y_1 \approx y (h) $ from $ y_0 = y(0)$ can be written as
\begin{equation}
    \begin{cases}
    Y_i = y_0 + h \sum_{ j \in [s] } a_{ij} f (Y_j) \qquad ( i \in [s] ), \\
    y_1 = y_0 + h \sum_{ i \in [s] } b_i f (Y_i),
    \end{cases}
\end{equation}
where $ [s]:= \{ 1, 2, \dots , s \} $. 

A subclass of Runge--Kutta methods preserve all quadratic invariants.

\begin{proposition}[\protect{Cooper~\cite{C1987}}]\label{prop:canonical}
Runge--Kutta methods satisfying
\begin{equation}\label{eq:canonical}
    b_i a_{ij} + b_j a_{ji} = b_i b_j \qquad  i , j \in [s]
\end{equation}
automatically preserve all quadratic invariants. 
\end{proposition}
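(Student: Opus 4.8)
The plan is to track the quantity $V(y_1)-V(y_0)$ directly through the Runge--Kutta update, where $V(y)=\frac12\inpr*{y}{Qy}$ for an arbitrary symmetric $Q\in\RR^{d\times d}$. The hypothesis that $V$ is an invariant of $\dot y=f(y)$ means precisely that $\inpr*{Qy}{f(y)}=0$ for every $y$ in the domain; this is the only input from the differential equation, and it will be used at the internal stage values $y=Y_i$ rather than at $y_0$ or $y_1$.

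First I would expand the update formula $y_1=y_0+h\sum_{i\in[s]}b_i f(Y_i)$, using symmetry of $Q$, to get
\begin{equation*}
V(y_1)=V(y_0)+h\sum_{i\in[s]}b_i\inpr*{Qy_0}{f(Y_i)}+\frac{h^2}{2}\sum_{i,j\in[s]}b_i b_j\inpr*{f(Y_i)}{Qf(Y_j)}.
\end{equation*}
Next I would eliminate $y_0$ from the linear term: substituting $y_0=Y_i-h\sum_j a_{ij}f(Y_j)$ into $\inpr*{Qy_0}{f(Y_i)}$, the term $\inpr*{QY_i}{f(Y_i)}$ vanishes by the invariance identity applied at $Y_i$, leaving
\begin{equation*}
h\sum_{i}b_i\inpr*{Qy_0}{f(Y_i)}=-h^2\sum_{i,j}b_i a_{ij}\inpr*{Qf(Y_j)}{f(Y_i)}.
\end{equation*}

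The decisive step is then to symmetrise this double sum. Writing $g_{ij}:=\inpr*{f(Y_i)}{Qf(Y_j)}$, symmetry of $Q$ gives $g_{ij}=g_{ji}$, hence $\sum_{i,j}b_i a_{ij}g_{ij}=\tfrac12\sum_{i,j}(b_i a_{ij}+b_j a_{ji})g_{ij}$, and now the canonical condition~\eqref{eq:canonical} replaces $b_i a_{ij}+b_j a_{ji}$ by $b_i b_j$. Collecting terms, the linear contribution becomes $-\frac{h^2}{2}\sum_{i,j}b_i b_j g_{ij}$, which cancels exactly the quadratic contribution, so $V(y_1)=V(y_0)$.

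I do not expect a genuine obstacle here: the computation is short and purely algebraic, requiring no information about existence or uniqueness of the stage values beyond their defining equations. The only points needing care are that the symmetrisation is legitimate precisely because $Q$ (hence $g_{ij}$) is symmetric, and that the invariance hypothesis is invoked only at the internal stages $Y_i$.
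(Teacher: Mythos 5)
Your argument is correct and complete: it is the standard proof of Cooper's result (expand $V(y_1)$, eliminate $y_0$ via the stage equations, use $\inpr*{QY_i}{f(Y_i)}=0$, symmetrise the double sum using the symmetry of $Q$, and apply the canonical condition to cancel the quadratic term). The paper itself states this proposition as a cited result without proof, so there is nothing to compare against; your derivation matches the classical one in the literature.
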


The Runge--Kutta methods satisfying~\eqref{eq:canonical} are said to be \emph{canonical} (see \cite{Butcher2021} for details on canonical Runge--Kutta methods). 
Note that a canonical Runge--Kutta method must be implicit. 
Two examples of canonical Runge--Kutta methods are given below. 

The best known canonical Runge--Kutta method is the Gauss method. 
The $ s $ stage Gauss method has order $ 2 s$. 
The coefficients $ A = (a_{ij}), b = (b_i), c = (c_i) $ of $s$ stage Gauss method are given by 
\begin{align}
    a_{ij} &= \int^{c_i}_0 \ell_i (\tau) \rd \tau, &
    b_i &= \int^1_0 \ell_i (\tau) \rd \tau,
\end{align}
where $ c_i $ is the $i$th zero of the $s$th shifted Legendre polynomial $ \paren*{\rd^s/\rd x^s} \paren*{ x^s \paren*{ x-1 }^s } $ and 
$ \ell_i $ is the Lagrange polynomial $ \ell_i (\tau) = \prod_{j \neq i} \paren*{ \tau - c_j }/\paren*{ c_i - c_j } $. 
In particular, Gauss methods of order 2, 4, and 6 are as follows:
\begingroup
\renewcommand{\arraystretch}{1.3}
\renewcommand{\arraycolsep}{4pt}
\begin{align} 
\begin{array}{c|c} c & A \\ \hline & b^{\trans} \end{array} &= \begin{array}{c|c} \frac{1}{2} & \frac{1}{2} \\ \hline & 1 \end{array}, &
\begin{array}{c|c} c & A \\ \hline & b^{\trans} \end{array} &= \begin{array}{c|cc} \frac{1}{2} - \frac{\sqrt{3}}{6} & \frac{1}{4} & \frac{1}{4} - \frac{\sqrt{3}}{6} \\ \frac{1}{2} + \frac{\sqrt{3}}{6} & \frac{1}{4} + \frac{\sqrt{3}}{6} & \frac{1}{4} \\ \hline & \frac{1}{2} & \frac{1}{2} \end{array},
\end{align}
\begin{align}
\begin{array}{c|c} c & A \\ \hline & b^{\trans} \end{array} = 
\begin{array}{c|ccc} \frac{1}{2}-\frac{\sqrt{15}}{10} & \frac{5}{36} & \frac{2}{9}-\frac{\sqrt{15}}{15} & \frac{5}{36}-\frac{\sqrt{15}}{30} \\ \frac{1}{2} & \frac{5}{36}+\frac{\sqrt{15}}{24} & \frac{2}{9} & \frac{5}{36}-\frac{\sqrt{15}}{24} \\ \frac{1}{2}+\frac{\sqrt{15}}{10} & \frac{5}{36}+\frac{\sqrt{15}}{30} & \frac{2}{9}+\frac{\sqrt{15}}{15} & \frac{5}{36} \\ \hline & \frac{5}{18} & \frac{4}{9} & \frac{5}{18} \end{array}. 
\end{align}
\endgroup
In addition, diagonally implicit Runge--Kutta methods corresponding to
\begin{equation}\label{eq:DICRK}
\begingroup
\renewcommand{\arraystretch}{1.1}
\renewcommand{\arraycolsep}{4pt}
\begin{array}{c|c} c & A \\ \hline & b^{\trans} \end{array} = \begin{array}{c|*6c} c_1 & \frac{b_1}{2} & & & & & \\ c_2 & b_1 & \frac{b_2}{2} &  & & & \\ c_3 & b_1 & b_2 & \frac{b_3}{2} & & & \\ \vdots & \vdots & \vdots & \vdots & \ddots & & \\ c_{s-1} & b_1 & b_2 & b_3 & \cdots & \frac{b_{s-1}}{2} & \\ c_s & b_1 & b_2 & b_3 & \cdots & b_{s-1} & \frac{b_s}{2} \\ \hline & b_1 & b_2 & b_3 & \hdots & b_{s-1} & b_s \end{array}
\endgroup
\end{equation}
are also canonical, where $ c_i = b_1 + \cdots + b_{i-1} + \frac{b_i}{2}$ (see \cite{ZQS2020} and references therein for details). 

\section{Proposed Approach}
\label{sec:proposed}

\subsection{Basic Idea}
\label{proposed:idea}

In this section, we consider numerical schemes of the form
\begin{equation}\label{eq:one-step}
\begin{cases}
    {\displaystyle Y_i = y_0 + h \sum_{j \in [s]} a_{ij} S(\widehat{Y}_j) \nabla V(Y_j) }, \\[10pt]
    {\displaystyle y_1 = y_0 + h \sum_{i \in [s]} b_i S(\widehat{Y}_i) \nabla V(Y_i) },
    \end{cases}
\end{equation}
where $ \widehat{Y}_i $ is an approximation of $ y (c_i h) $. 
Preparing $ \widehat{Y}_i $'s beforehand, 
we can obtain the solution $y_1$ of the scheme~\eqref{eq:one-step} by solving a linear equation. 
Moreover, if $A$ and $b$ satisfy~\eqref{eq:canonical}, 
the scheme~\eqref{eq:one-step} preserves the invariant $V$ as shown in \cref{thm:cl} below. 
The Runge--Kutta method corresponding to $A$ and $b$ in the scheme~\eqref{eq:one-step} is referred to as the \emph{base Runge--Kutta method} (of the scheme~\eqref{eq:one-step}) hereafter. 

\begin{theorem}\label{thm:cl}
    Suppose that $ A $ and $b$ satisfy~\eqref{eq:canonical}. 
    Then the solution $y_1$ of \eqref{eq:one-step} satisfies $ V(y_1) = V(y_0) $ for any $ h > 0$. 
\end{theorem}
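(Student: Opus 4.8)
The plan is to mimic the classical Cooper argument for quadratic-invariant preservation by canonical Runge--Kutta methods, carefully tracking the modification introduced by the frozen argument $\widehat{Y}_i$ in the matrix $S$. Write $V(y) = \tfrac12 \inpr*{y}{Qy}$ with $Q$ symmetric, and abbreviate $g_i := S(\widehat{Y}_i)\nabla V(Y_i) = S(\widehat{Y}_i) Q Y_i$, so that $Y_i = y_0 + h\sum_j a_{ij} g_j$ and $y_1 = y_0 + h\sum_i b_i g_i$.

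First I would compute $V(y_1) - V(y_0)$ directly by expanding the quadratic form:
\begin{equation}
V(y_1) - V(y_0) = h \sum_{i\in[s]} b_i \inpr*{y_0}{Q g_i} + \frac{h^2}{2} \sum_{i,j\in[s]} b_i b_j \inpr*{g_i}{Q g_j}.
\end{equation}
Next I would eliminate $y_0$ in the linear term using the stage equations: from $Y_i = y_0 + h\sum_j a_{ij} g_j$ we get $y_0 = Y_i - h\sum_j a_{ij} g_j$, so $\inpr*{y_0}{Q g_i} = \inpr*{Y_i}{Q g_i} - h\sum_j a_{ij}\inpr*{g_j}{Q g_i}$. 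Substituting gives
\begin{equation}
V(y_1) - V(y_0) = h \sum_i b_i \inpr*{Y_i}{Q g_i} - h^2 \sum_{i,j} b_i a_{ij} \inpr*{g_j}{Q g_i} + \frac{h^2}{2}\sum_{i,j} b_i b_j \inpr*{g_i}{Q g_j}.
\end{equation}

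The crucial observation is that $\inpr*{Y_i}{Q g_i} = \inpr*{QY_i}{S(\widehat{Y}_i) Q Y_i} = 0$ because $S(\widehat{Y}_i)$ is skew-symmetric and $QY_i$ is a fixed vector; this is exactly where freezing the argument of $S$ does no harm, since skew-symmetry of $S(\widehat{Y}_i)$ holds for any argument. Hence the linear term vanishes entirely. For the remaining quadratic terms, since $Q$ is symmetric we have $\inpr*{g_j}{Q g_i} = \inpr*{g_i}{Q g_j}$, so the $h^2$ contribution is
\begin{equation}
-\,h^2 \sum_{i,j} \Bigl( b_i a_{ij} - \tfrac12 b_i b_j \Bigr)\inpr*{g_i}{Q g_j} = -\frac{h^2}{2}\sum_{i,j}\bigl( b_i a_{ij} + b_j a_{ji} - b_i b_j\bigr)\inpr*{g_i}{Q g_j},
\end{equation}
where in the last step I symmetrised the sum over the index pair $(i,j)$ using $\inpr*{g_i}{Q g_j} = \inpr*{g_j}{Q g_i}$. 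By the canonical condition~\eqref{eq:canonical}, every bracket $b_i a_{ij} + b_j a_{ji} - b_i b_j$ is zero, so this term vanishes and $V(y_1) = V(y_0)$, as claimed.

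I do not anticipate a genuine obstacle here: the argument is essentially Cooper's, and the only point requiring care is recognising that $\widehat{Y}_i$ enters $S$ only through a matrix that remains skew-symmetric regardless of its argument, so both the vanishing of $\inpr*{Y_i}{Q g_i}$ and the symmetrisation step go through unchanged. The symmetrisation of the double sum (relabelling $i \leftrightarrow j$ and averaging) is the one manipulation worth writing out explicitly rather than leaving to the reader.
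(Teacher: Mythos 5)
Your proof is correct, but it takes a more self-contained route than the paper. The paper's proof is a two-line reduction: it views \eqref{eq:one-step} as the ordinary Runge--Kutta method with coefficients $A,b$ applied to the modified, non-autonomous ODE $\dot{y} = S(\widehat{y}(t))\nabla V(y)$, where $\widehat{y}$ is any smooth function with $\widehat{y}(c_i h) = \widehat{Y}_i$; since $V$ is still conserved along this modified flow (skew-symmetry gives $\inpr*{Qy}{S(\widehat{y}(t))Qy}=0$ pointwise), Cooper's result (\cref{prop:canonical}) applies and the conclusion follows. You instead redo Cooper's computation from scratch in this setting: expand $V(y_1)-V(y_0)$, eliminate $y_0$ via the stage equations, kill the linear term by skew-symmetry of $S(\widehat{Y}_i)$, and kill the quadratic term by symmetrising the double sum and invoking \eqref{eq:canonical}; each step checks out. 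Your version is longer but makes explicit the one point the paper's reduction glosses over, namely that \cref{prop:canonical} is being applied to a non-autonomous vector field --- harmless precisely because the pointwise orthogonality $\inpr*{Qy}{f(t,y)}=0$ is all the standard proof uses, which is exactly the observation your direct argument isolates. The paper's version buys brevity and the conceptual framing that the scheme \emph{is} a canonical RK method for a perturbed problem, a viewpoint it reuses in the accuracy proofs (e.g.\ \cref{thm:err_si_itr}).
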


\begin{proof}
    The scheme~\eqref{eq:one-step} can be viewed as the usual Runge--Kutta method corresponding to $ A, b $ for the modified ODE $ \dot{y} = S \paren*{ \widehat{y} } \nabla V(y) $, 
    where $ \widehat{y} $ is a smooth function satisfying $ \widehat{y} (c_i h) = \widehat{Y}_i $. 
    Even in this case, the quadratic function $V$ is a conserved quantity so that it is automatically preserved by Runge--Kutta methods satisfying~\eqref{eq:canonical}. 
\end{proof}

The accuracy of the solution $y_1$ of the scheme~\eqref{eq:one-step} depends on $ \widehat{Y}_i$'s as well as the base Runge--Kutta method. 
In the subsequent subsections, we discuss how we should prepare $\widehat{Y}_i$'s to achieve a required order. 

\subsection{Partitioned Runge--Kutta approach}
\label{proposed:PRK}

In this section, we propose an approach using partitioned Runge--Kutta methods to constructing $ \widehat{Y}_i $'s in~\eqref{eq:one-step}. 

We consider a partitioned system
\begin{equation}\label{eq:partitioned}
    \begin{cases}
        \dot{z} = S(z) \nabla V(z),\\
        \dot{y} = S(z) \nabla V(y)
    \end{cases}
\end{equation}
with the initial condition $ z(t_0) = y (t_0) = y_0 $. 
Note that the solutions $y,z$ of~\eqref{eq:partitioned} satisfy $ z(t) = y(t)$ for any $t$, and coincide with the solution of the original system~\eqref{eq:grad_sys}. 
A partitioned Runge--Kutta method for~\eqref{eq:partitioned} reads
\begin{equation}\label{eq:PRK}
    \begin{cases}
        {\displaystyle Z_i = y_0 + h \sum_{j \in [s]} \widehat{a}_{ij} S (Z_j) \nabla V(Z_j)}, \\[10pt]
        {\displaystyle Y_i = y_0 + h \sum_{ j \in [s] } a_{ij} S (Z_j) \nabla V(Y_j) }, \\[10pt]
        {\displaystyle z_1 = y_0 + h \sum_{i \in [s] } \widehat{b}_i S ( Z_i ) \nabla V (Z_i) } ,\\[10pt] 
        {\displaystyle y_1 = y_0 + h \sum_{i \in [s] } b_i S ( Z_i ) \nabla V (Y_i) }. 
    \end{cases}
\end{equation}
By ignoring $z_1$ and assuming $ \widehat{a}_{ij} = 0 \ ( i \le j ) $, we can regard the scheme~\eqref{eq:PRK} as a special case of the scheme~\eqref{eq:one-step}. 
This construction enables us to use known order conditions for partitioned Runge--Kutta methods. 

Moreover, the number of order conditions can be reduced by using a specific property of our problem~\eqref{eq:partitioned}. 
The solution $y_1$ of~\eqref{eq:PRK} is a P-series
\begin{equation}\label{eq:PRK:P-series}
    y_1 = y_0 + \sum_{ \tau \in TP_y} \frac{ h^{|\tau|} }{\sigma (\tau)} \phi (\tau) F(\tau) (y_0,y_0),
\end{equation}
where $ TP_y $ denotes the set of bi-coloured trees with black roots (see \cite{HLW2010} for details). 
Since the first equation of~\eqref{eq:partitioned} does not depend on $y$, 
the elementary differential $ F (\tau ) $ vanishes for the bi-coloured tree $ \tau $ having a black vertex whose parent is a white vertex. 
In view of this, we introduce a subset $ TP'_y$ of $TP_y$ collecting bi-coloured trees such that the parent of each black vertex is also black, i.e. 
\[ TP'_y = \setE*{\raisebox{-2mm}{
$\black\tree{},
\tree{\vertex10},
\tree{\white\vertex10},
\tree{\vertex1{0.6}\vertex2{-0.6}},
\tree{\vertex1{0.6}\change\vertex2{-0.6}},
\tree{\white\vertex1{0.6}\vertex2{-0.6}},
\tree{\vertex10\vertex10},
\tree{\vertex10\change\vertex10},
\tree{\change\vertex10\vertex10},\dots $}
} \]
In addition to the above observation, 
the standard order condition for partitioned Runge--Kutta methods (see, e.g. \cite[III, Theorem~2.5]{HLW2010}) implies the following theorem. 

\begin{theorem}\label{thm:PRK_OC}
The scheme~\eqref{eq:PRK} has order $p$, i.e. $ y_1 - y(h) = O (h^{p+1}) $, 
if 
\begin{equation}\label{eq:PRK_OC}
    \phi (\tau) = \frac{1}{ \tau ! } \qquad \text{for } \tau \in TP'_y \text{ with } \abs{\tau} \le p. 
\end{equation}
\end{theorem}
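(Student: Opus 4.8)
Here is my plan for proving \cref{thm:PRK_OC}.

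The plan is to invoke the classical P-series theory for partitioned Runge--Kutta methods and then cut down the set of trees that actually need to be checked. First I would recall that, by the standard order theory (e.g.\ \cite[III, Theorem~2.5]{HLW2010}), the partitioned Runge--Kutta method \eqref{eq:PRK} applied to the partitioned system \eqref{eq:partitioned} produces a numerical solution $y_1$ with the P-series expansion \eqref{eq:PRK:P-series}, and the exact solution $y(h)$ has the analogous P-series with $\phi(\tau)$ replaced by $1/\tau!$. Subtracting, one gets
\[
    y_1 - y(h) = \sum_{\tau \in TP_y} \frac{h^{|\tau|}}{\sigma(\tau)} \paren*{ \phi(\tau) - \frac{1}{\tau!} } F(\tau)(y_0,y_0),
\]
so a sufficient condition for order $p$ is that $\phi(\tau) = 1/\tau!$ holds for every $\tau \in TP_y$ with $|\tau| \le p$. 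The content of the theorem is that it is enough to require this only for $\tau$ in the smaller set $TP'_y$.

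The key step is therefore the reduction from $TP_y$ to $TP'_y$: I would argue that for every bi-coloured tree $\tau \in TP_y \setminus TP'_y$ the elementary differential $F(\tau)(y_0,y_0)$ vanishes identically, so those trees contribute nothing to the error expansion regardless of the value of $\phi(\tau)$. Recall that in \eqref{eq:partitioned} the $z$-component evolves by $\dot z = S(z)\nabla V(z)$, which does not depend on $y$; in the language of P-trees, a black vertex whose parent is white corresponds to differentiating the $z$-vector field $g(z) := S(z)\nabla V(z)$ with respect to the $y$-variable, which gives zero. Hence if $\tau$ has \emph{any} black vertex whose parent is white, the associated elementary differential is the zero tensor. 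The trees in $TP_y$ for which no black vertex has a white parent are exactly those in $TP'_y$ (every black vertex has a black parent, the root being black by definition of $TP_y$). This is precisely the observation already made in the paragraph preceding the theorem, so the argument is really just to spell it out carefully: take $\tau \in TP_y$ with $|\tau|\le p$; if $\tau \in TP'_y$ then $\phi(\tau)=1/\tau!$ by hypothesis \eqref{eq:PRK_OC}; if $\tau \notin TP'_y$ then $F(\tau)(y_0,y_0)=0$; in both cases the corresponding summand vanishes, and summing over all $|\tau|\le p$ gives $y_1 - y(h) = O(h^{p+1})$.

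I do not expect a genuine obstacle here, since the theorem is essentially a bookkeeping refinement of known order theory. The one point that deserves care is making the vanishing-of-elementary-differentials claim precise: one should state clearly how elementary differentials for P-series are built (the recursive rule attaching subtrees to the relevant component of the vector field) and then note that because $g(z)=S(z)\nabla V(z)$ is independent of $y$, every partial derivative $\partial^k g / \partial y^{k} = 0$, which kills any tree with a ``white-over-black'' edge. If one wants to be completely self-contained one could instead verify the claim by induction on the tree structure, peeling off leaves; but citing the P-series formalism of \cite{HLW2010} and invoking the independence of $g$ on $y$ is cleaner and sufficient.
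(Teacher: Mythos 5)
Your proposal is correct and follows essentially the same route as the paper, which derives the theorem directly from the standard order conditions for partitioned Runge--Kutta methods (\cite[III, Theorem~2.5]{HLW2010}) combined with the observation, stated in the paragraph preceding the theorem, that $F(\tau)$ vanishes for any bi-coloured tree with a black vertex whose parent is white because the $z$-equation in \eqref{eq:partitioned} does not depend on $y$. Your write-up merely spells out this reduction from $TP_y$ to $TP'_y$ more explicitly than the paper does.
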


For example,
when we employ the implicit midpoint rule, i.e. 
\begingroup
\renewcommand{\arraystretch}{1.2}
\renewcommand{\arraycolsep}{3pt}
\[ \begin{array}{c|c} c & A \\ \hline & b^{\trans} \end{array} = \begin{array}{c|c} \frac{1}{2} & \frac{1}{2} \\ \hline & 1 \end{array}, \]
one may feel that we cannot construct second order PRK scheme in the form~\eqref{eq:PRK}: 
since we assume $ \widehat{a}_{11} = 0 $ for explicit computation of $ Z_i$'s, $ \phi \paren[\big]{ \tree{\frame{-0.4,-0.5}{0.4,1.2}\change\vertex10} } = 0 \neq \frac{1}{2} $ holds. 
However, this issue can be resolved by a simple trick. 
By adding a redundant stage, the implicit midpoint rule can be rewritten as
\[ \begin{array}{c|c} c & A \\ \hline & b^{\trans} \end{array} = \begin{array}{c|cc} 0 & 0 & 0 \\ \frac{1}{2} & 0 & \frac{1}{2} \\ \hline & 0 & 1 \end{array}. \]
This Runge--Kutta method is equivalent to the implicit midpoint rule so that it satisfies~\eqref{eq:canonical} and has order $2$: $ \phi (\tree{}) = 1,\  \phi \paren[\big]{ \tree{\frame{-0.4,-0.5}{0.4,1.2}\vertex10}} = \frac{1}{2} $. 
Then, 
\[ \begin{array}{c|c} \widehat{c} & \widehat{A} \\ \hline &  \end{array} = \begin{array}{c|cc} 0 & 0 & 0 \\ \frac{1}{2} & \frac{1}{2} & 0 \\ \hline &  &  \end{array} \]
satisfies $  \phi \paren[\big]{ \tree{\frame{-0.4,-0.5}{0.4,1.2}\change\vertex10}} = \frac{1}{2} $, and the resulting scheme has order $2$. 

Similarly, based on the 4th order Gauss method, we can construct the pair
\begin{equation}\label{eq:PRK_Gauss2}
    \begin{array}{c|c} c & A \\ \hline & b^{\trans} \end{array} = \begin{array}{c|ccccc}
        0 & 0 & 0 & 0 & 0 & 0 \\
        0 & 0 & 0 & 0 & 0 & 0 \\
        0 & 0 & 0 & 0 & 0 & 0 \\
        \frac{1}{2} - \frac{\sqrt{3}}{6} & 0 & 0 & 0 & \frac{1}{4} & \frac{1}{4} - \frac{\sqrt{3}}{6} \\
        \frac{1}{2} + \frac{\sqrt{3}}{6} & 0 & 0 & 0 & \frac{1}{4} + \frac{\sqrt{3}}{6} & \frac{1}{4} \\ \hline 
         & 0 & 0 & 0 &  \frac{1}{2} & \frac{1}{2}
    \end{array} \quad
    \begin{array}{c|c} \widehat{c} & \widehat{A} \\ \hline &  \end{array} = \begin{array}{c|ccccc}
        0 & 0 & 0 & 0 & 0 & 0 \\
        \frac{1}{4} & \frac{1}{4} & 0 & 0 & 0 & 0 \\
        \frac{1}{2} & 0 & \frac{1}{2} & 0 & 0 & 0 \\
        \frac{1}{2} - \frac{\sqrt{3}}{6} & \frac{1}{6} & 0 & \frac{1}{3} - \frac{\sqrt{3}}{6} & 0 & 0 \\
        \frac{1}{2} + \frac{\sqrt{3}}{6} & \frac{1}{6} & 0 & \frac{1}{3}+\frac{\sqrt{3}}{6} & 0 & 0 \\ \hline 
         & &  &  & & 
    \end{array}
\end{equation}
having order $4$ (see \cref{ap:proof_PRK} for the proof). 

Based on the $3$ stage third order diagonally implicit canonical RK method, we can construct the pair 
\begin{equation}\label{eq:PRK_433dic}
    \begin{array}{c|c} c & A \\ \hline & b^{\trans} \end{array} = \begin{array}{c|cccc}
        0 & 0 & 0 & 0 & 0 \\
        \frac{\alpha}{2} & 0 & \frac{\alpha}{2} & 0 & 0 \\
        \frac{3}{2} \alpha & 0 & \alpha & \frac{\alpha}{2} & 0 \\
        \frac{1}{2} + \alpha & 0 & \alpha & \alpha & \frac{1}{2} - \alpha \\ \hline 
          & 0 & \alpha & \alpha & 1 - 2 \alpha 
    \end{array} \qquad
    \begin{array}{c|c} \widehat{c} & \widehat{A} \\ \hline &  \end{array} = \begin{array}{c|cccc}
        0 & 0 & 0 & 0 & 0 \\
        \frac{\alpha}{2} & \frac{\alpha}{2} & 0 & 0 & 0  \\
        \frac{3}{2} \alpha & \frac{3}{2} \alpha - \gamma_1 & \gamma_1 & 0 & 0 \\
        \frac{1}{2} + \alpha & \frac{1}{2} + \alpha - \gamma_2 - \gamma_3 & \gamma_2 & \gamma_3 & 0 \\ \hline 
         & &  &  & 
    \end{array}
\end{equation}
having order $3$ (see \cref{ap:proof_PRK} for the proof), where $ \alpha = \frac{1}{3} \paren*{ 2 + \frac{1}{2^{1/3}} + 2^{1/3} } $, and $ \gamma_i $'s are arbitrary parameters satisfying 
\begin{equation}\label{eq:PRK_433dic_cond}
\alpha^2 \gamma_1 + \alpha ( 1 - 2 \alpha ) \gamma_2 + 3 \alpha (1 - 2 \alpha ) \gamma_3 = \frac{1}{3}.
\end{equation}
\endgroup

\subsection{Other methods for preparing $ \widehat{Y}_i $'s}
\label{proposed:general_predictors}

In principle, the partitioned Runge--Kutta approach in the previous section works well for arbitrary high order. 
Still, finding higher order schemes is not very easy. 
Therefore, in this section, we show several other methods to prepare $ \widehat{Y}_i $'s such that the resulting scheme~\eqref{eq:one-step} has order $p$. 
To this end, the following theorem is fundamental. 
Since \cref{thm:acc_one-step} is a special case of \cref{thm:err_si_itr}, the proof is omitted. 
In the sequel, $ \norm{\cdot} : \RR^d \to \RR $ denotes the Euclidean norm, and $ \norm{\cdot}_2 $ (resp. $ \norm{\cdot}_{\infty} $) denotes the matrix norm induced by vector 2-norm (resp. $ \infty $-norm). 

\begin{theorem}\label{thm:acc_one-step}
Assume the following conditions: 
\begin{description}
    \item[(A1)] $ \widehat{Y}_i $ satisfies $ \norm*{ \widehat{Y}_i - y( c_i h ) } \le C h^{q} $ for each $ i \in [s] ${\em ;} 
    \item[(A2)] $ S : \RR^d \to \RR^{d \times d} $ is Lipschitz continuous, i.e. $ \norm*{ S (y_1) - S(y_2) }_2 \le L_S \norm*{y_1 - y_2}  ${\em ;}
    \item[(A3)] the base Runge--Kutta method has order $p$.
\end{description}
Then, the numerical solution $y_1$ of~\eqref{eq:one-step} satisfies $ \norm*{ y_1 - y (h) } \le C' h^{ \min\{ p ,q \} + 1 } $ for sufficiently small $h > 0$, i.e. the scheme~\eqref{eq:one-step} has order $ \min \{p,q \} $. Here, $ C' $ is a constant depending only on the exact solution $y$, $S$, $Q$, $A$, $b$ and the constant $C$ in (A1).   
\end{theorem}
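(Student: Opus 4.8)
The plan is to compare the numerical solution $y_1$ of the linearly implicit scheme~\eqref{eq:one-step} with the numerical solution $\widetilde{y}_1$ obtained by applying the \emph{same} base Runge--Kutta method to the original system~\eqref{eq:grad_sys}, and then invoke (A3) for the second comparison with $y(h)$. First I would write down the two sets of stage equations: the $Y_i$ satisfying $Y_i = y_0 + h\sum_j a_{ij} S(\widehat{Y}_j)\nabla V(Y_j)$ and the $\widetilde{Y}_i$ satisfying $\widetilde{Y}_i = y_0 + h\sum_j a_{ij} S(\widetilde{Y}_j)\nabla V(\widetilde{Y}_j)$. Subtracting these and using that $\nabla V(y) = Qy$ is linear (hence globally Lipschitz with constant $\norm{Q}_2$), that $S$ is Lipschitz by (A2), and that $S$, $\nabla V$ and the exact-solution values are bounded on a neighbourhood of the solution trajectory, I would get, for $h$ small enough, an estimate of the form $\norm{Y_i - \widetilde{Y}_i} \le K_1 h \sum_j \norm{Y_j - \widetilde{Y}_j} + K_2 h \sum_j \norm{\widehat{Y}_j - \widetilde{Y}_j}$, where the first group of terms comes from the arguments of $\nabla V$ and the second from the arguments of $S$. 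Absorbing the $K_1 h\sum_j\norm{Y_j-\widetilde{Y}_j}$ term into the left side (valid once $h$ is small, so the linear solve is well-conditioned) yields $\norm{Y_i - \widetilde{Y}_i} \le K_3 h \sum_j \norm{\widehat{Y}_j - \widetilde{Y}_j}$.

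Next I would control $\norm{\widehat{Y}_j - \widetilde{Y}_j}$. Since $\widetilde{Y}_j - y(c_j h) = O(h^{2})$ (a standard bound on the internal stages of an order-$p\ge 1$ RK method, actually $O(h^{p+1})$ is not needed — $O(h^2)$ suffices here but even $O(h)$ would do), and $\widehat{Y}_j - y(c_j h) = O(h^q)$ by (A1), the triangle inequality gives $\norm{\widehat{Y}_j - \widetilde{Y}_j} \le C h^{q} + O(h^{2}) = O(h^{\min\{q,2\}}) = O(h^{\min\{q,1\}+1})$. Plugging this into the stage estimate above produces $\norm{Y_i - \widetilde{Y}_i} = O(h^{\min\{q,1\}+2})$, and then the update formulas $y_1 = y_0 + h\sum_i b_i S(\widehat{Y}_i)\nabla V(Y_i)$ and $\widetilde{y}_1 = y_0 + h\sum_i b_i S(\widetilde{Y}_i)\nabla V(\widetilde{Y}_i)$, differenced and estimated in the same way, give $\norm{y_1 - \widetilde{y}_1} \le K_4 h\paren*{\sum_i\norm{Y_i-\widetilde{Y}_i} + \sum_i\norm{\widehat{Y}_i - \widetilde{Y}_i}} = O(h^{\min\{q,1\}+2})$, which is in particular $O(h^{q+1})$. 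Finally, by (A3) we have $\norm{\widetilde{y}_1 - y(h)} \le C'' h^{p+1}$, so the triangle inequality gives $\norm{y_1 - y(h)} = O(h^{\min\{p,q\}+1})$, as claimed, with the constant depending only on the data listed in the statement.

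A couple of technical points need care. One must justify that the nonlinear stage equations for $Y_i$ (which are genuinely linear in $Y_i$ given the $\widehat{Y}_j$, so uniquely solvable for small $h$) and for $\widetilde{Y}_i$ (solvable by the standard implicit-function/contraction argument for small $h$) indeed have solutions lying in a fixed compact neighbourhood of the trajectory $\{y(t): t\in[0,h]\}$ on which $S$, $\nabla V$ are bounded; this is routine but should be stated. The constants $K_1,\dots,K_4$ should be tracked well enough to confirm they depend only on $y$, $S$, $Q$, $A$, $b$, $C$ — the dependence on $Q$ entering through $\norm{Q}_2$ as the Lipschitz/bound constant for $\nabla V$.

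The main obstacle is the absorption step: closing the implicit estimate $\norm{Y_i-\widetilde{Y}_i}\le K_1 h\sum_j\norm{Y_j-\widetilde{Y}_j}+\dots$ requires $K_1 h < 1$ (indeed $s K_1 h < 1$ when summing), which is exactly where ``sufficiently small $h$'' is used, and one must be careful that $K_1$ is a genuine constant — not secretly depending on the as-yet-unbounded stage values. This is handled by first establishing the a priori neighbourhood bound, then running the Lipschitz estimates on that fixed neighbourhood; everything downstream is then a bookkeeping exercise in powers of $h$.
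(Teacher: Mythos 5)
Your overall strategy---compare $y_1$ with the base Runge--Kutta solution $\widetilde{y}_1$ of the original system and then invoke (A3)---contains a genuine gap at the step where you estimate $\norm{\widehat{Y}_j-\widetilde{Y}_j}$ by the triangle inequality through $y(c_jh)$. The internal stages $\widetilde{Y}_j$ of an order-$p$ Runge--Kutta method are in general only \emph{stage-order} accurate: $\norm{\widetilde{Y}_j-y(c_jh)}=O(h^{\eta+1})$ with $\eta$ possibly as small as $1$ (you yourself use only $O(h^2)$). Hence your bound is $\norm{\widehat{Y}_j-\widetilde{Y}_j}=O(h^{\min\{q,2\}})$, and after multiplying by the factor $h$ from the update formula you obtain $\norm{y_1-\widetilde{y}_1}=O(h^{\min\{q,2\}+1})$, which is at best $O(h^3)$. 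The chain of identities $O(h^{\min\{q,2\}})=O(h^{\min\{q,1\}+1})$ and the final claim that this ``is in particular $O(h^{q+1})$'' are false as soon as $q>2$: for the paper's flagship example ($6$th-order Gauss with $q=6$ predictors) your argument only delivers order $2$, not order $6$. The difficulty is structural, not bookkeeping: $y_1-\widetilde{y}_1$ does enjoy the cancellations needed for order $\min\{p,q\}$, but a termwise Lipschitz estimate of $S(\widehat{Y}_i)QY_i-S(\widetilde{Y}_i)Q\widetilde{Y}_i$ cannot see them, because it is blind to the order conditions that make the low-accuracy stages $\widetilde{Y}_i$ produce a high-accuracy update.

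The paper's route (the proof of \cref{thm:err_si_itr} with $k=1$) avoids the stage-order bottleneck by choosing a different comparison object: a smooth interpolant $y^{(0)}$ with $y^{(0)}(c_ih)=\widehat{Y}_i$ and $\sup_t\norm{y^{(0)}(t)-y(t)}\le Ch^q$ is introduced, so that the scheme~\eqref{eq:one-step} is \emph{exactly} the base Runge--Kutta method applied to the non-autonomous linear ODE $\dot{y}^{(1)}=S\paren[\big]{y^{(0)}(t)}\nabla V\paren[\big]{y^{(1)}}$. Assumption (A3) then gives $\norm{y_1-y^{(1)}(h)}\le C''h^{p+1}$ with the \emph{exact} flow of the modified equation as reference (no stage estimates needed), while a Gronwall-type integral estimate using (A2) and the linearity of $\nabla V$ gives $\sup_{t\in[0,h]}\norm{y^{(1)}(t)-y(t)}\le 2L_S\norm{Q}_2C_yCh^{q+1}$. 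The triangle inequality then yields the full $O(h^{\min\{p,q\}+1})$. If you want to keep a discrete flavour, you would have to prove that the base method applied to the frozen-coefficient system retains order $p$ and separately compare the two exact flows---which is precisely the paper's decomposition.
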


Based on the theorem above, 
we list several ways for preparing $ \widehat{Y}_i $'s below.

\begin{itemize}
    \item 
    When we employ the $s$-stage Gauss method (order $2s$) as the base RK method, 
    by using some dense output formulas having order $2s -1$, 
    we achieve $ 2 s $ order (see, e.g. \cite[II.6]{HNW1993} for details on dense output formulas). 
    For example, we can employ Continuous Explicit Runge--Kutta (CERK) method (see, e.g. \cite{OZ1992}), or the boot-strapping process~\cite{EJNT1986}. 
    
    In \cite{ALL2019SISC,GZW2019arx}, the similar techniques are proposed for some specific schemes. They also employ the Gauss method for the main scheme. However, they employ an extrapolation technique using inner stages in the previous step to construct $ \widehat{Y}_i $'s which are $ O ( h^{s+1} )$ approximation. Thus, the resulting scheme only has order $ s+1$. 
\item 
We consider the diagonally implicit canonical RK methods~\eqref{eq:DICRK} as the base RK method. 
The known orders of them are summarised in \cref{tab:DICRK_order}. 
    \begin{table}[ht]
    \caption{Orders of diagonally implicit canonical RK methods}
    \label{tab:DICRK_order}
    \centering
    \begin{tabular}{c|cccccc}
        \hline
        stage & 1 & 2 & 3 & 4 & 5 & 6 \\ \hline
        order & 2 & 2 & 3 & 4 & 4 & 5 \\ \hline
    \end{tabular}
    \end{table}
    For the second order methods, it is enough to use locally second order approximations, such as the explicit Euler method, i.e. $ \widehat{Y}_i = y_0 + c_i h S ( y_0) \nabla V(y_0) $. 
    On the other hand, for the $3$rd or $4$th order methods, 
    the Hermite interpolation using the current and previous step can be employed to achieve the $4$th order. 
    To achieve higher order, the Hermite interpolation using more steps or dense output formulas can be used. 
\end{itemize}

\section{Iterative procedure}
\label{sec:itr}

Since each inner stage $ Y_i $ of the proposed scheme~\eqref{eq:one-step} itself is again an approximations of $ y (c_i h) $, 
we consider the following iterative procedure. 

\begin{definition}[Iterative scheme (semi-implicit update)]\label{def:si}
\hfill
\begin{description}
    \item[Step 0] Prepare $ Y^{(0)}_i \approx y (c_i h) $ and set $k = 1$. 
    \item[Step 1] Solve 
        \begin{equation}\label{eq:itr_siu}
            Y^{(k)}_i = y_0 + h \sum_{ j \in [s]} a_{ij} S \paren*{Y^{(k-1)}_j } \nabla V \paren*{ Y^{(k)}_j } \qquad ( i \in [s] )
        \end{equation}
        to obtain $ Y_i^{(k)} $'s. 
        If some criteria hold, go to Step~2. 
        Otherwise, set $k = k + 1 $ and repeat Step 1.
    \item[Step 2] Output 
        \[ y_1^{(k)} = y_0 + h \sum_{j \in [s] } b_j S \paren*{Y^{(k-1)}_j } \nabla V \paren*{ Y^{(k)}_j }. \]
\end{description}
\end{definition}

In this section, we investigate this iterative procedure in detail. 
In \cref{itr:conv}, we prove that the numerical solution $ y_1^{(k)} $ converges as $ k \to \infty $ to that of the base Runge--Kutta method (\cref{thm:conv_si}). 
Then, in \cref{itr:acc}, we prove that $ y_1^{(k)} $ satisfies $ \norm*{ y^{(k)}_1 - y(h) } \le C' h^{ \min\{ p, q + k-1 \}+1 } $ when $ Y^{(0)}_i $'s are locally $ q $th order approximations (\cref{thm:err_si_itr}). 
These results imply that the iterative procedure
\begin{itemize}
    \item can be regarded as an implementation of the base RK method, whose numerical solution conserves the invariant $V$ upto the tolerance of linear equation solver (not the tolerance of nonlinear equation solver); 
    \item provides a simple implementation of arbitrary higher order linearly implicit conservative methods. For example, to achieve $2s$ order based on the Gauss method, we can employ the explicit Euler step to obtain $Y^{(0)}_i$'s, and just iterate the procedure above $ 2 s - 1$ times. 
\end{itemize}
Moreover, in \cref{itr:explicit}, we propose a computationally cheaper version of the iterative procedure, and show the similar theorems. 

\subsection{Convergence of semi-implicit iteration}
\label{itr:conv}

The procedure defined in \cref{def:si} converges linearly as shown below. 
The theorem reveals that, the iterative procedure can be regarded as an implementation of the base RK method. 
There, unlike usual nonlinear equation solvers, 
the numerical solution $ y^{(k)}_1 $ preserves the quadratic invariant $V$ for any $k$. 

\begin{theorem}[Linear convergence of semi-implicit iteration]\label{thm:conv_si}
    Let $ S : \RR^d \to \RR^{d \times d}$ be the Lipschitz continuous map, and let $ \{ Y^{(k)}_i \}_{k,i} $ be the set of vectors satisfying the relation~\eqref{eq:itr_siu}. 
    Assume that $ h < \overline{h} := \paren*{ \norm{A}_{\infty} \norm*{Q}_2 \paren*{ L_S C + M_S } }^{-1}$ holds, where $ C = \max_{i \in [s]} \norm*{ Y_i } $, $ M_S = \max_{ i \in [s] } \paren*{ L_S \norm*{ Y_i^{(0)} - Y_i } + \norm*{ S(Y_i) }_2 }$, and $ Y_i $'s are the inner stages of the base Runge--Kutta method. 
    Then, for each $k = 1,2,\dots$,
    \begin{equation}\label{ineq:conv}
    \max_{i \in [s]} \norm*{ Y^{(k+1)}_i - Y_i } \le \frac{ h \norm*{ A }_{\infty} \norm*{Q}_2 L_S C }{1-h \norm*{ A }_{\infty}  \norm*{Q}_2 M_S } \max_{ i \in [s] } \norm*{ Y^{(k)}_i - Y_i }
    \end{equation}
    holds. 
    In particular, $Y_i^{(k)}$'s converge linearly to $Y_i$'s. 
\end{theorem}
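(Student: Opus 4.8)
The plan is to treat the iteration~\eqref{eq:itr_siu} as a fixed-point iteration whose unique fixed point (for $h$ small enough) is the stage vector $(Y_1,\dots,Y_s)$ of the base Runge--Kutta method, and to show that the associated map is a contraction in the $\max$-norm over $[s]$. Write $\mathcal Y^{(k)} = (Y_1^{(k)},\dots,Y_s^{(k)})$ and note that, since $V$ is quadratic, $\nabla V(Y_j^{(k)}) = Q Y_j^{(k)}$, so~\eqref{eq:itr_siu} is the \emph{linear} system
\begin{equation*}
    Y_i^{(k)} = y_0 + h \sum_{j \in [s]} a_{ij}\, S\paren[\big]{Y_j^{(k-1)}}\, Q\, Y_j^{(k)} \qquad (i \in [s]);
\end{equation*}
solvability of this system for $h < \overline h$ is where the bound $\overline h$ enters. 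Subtracting the defining relation $Y_i = y_0 + h \sum_j a_{ij} S(Y_j) Q Y_i$ of the base method, I would write
\begin{equation*}
    Y_i^{(k+1)} - Y_i = h \sum_{j \in [s]} a_{ij} \paren[\Big]{ S\paren[\big]{Y_j^{(k)}} Q Y_j^{(k+1)} - S(Y_j) Q Y_j }
\end{equation*}
and split the bracket as $S(Y_j^{(k)}) Q (Y_j^{(k+1)} - Y_j) + \paren[\big]{S(Y_j^{(k)}) - S(Y_j)} Q Y_j$, so that the first group carries the unknown increment $Y^{(k+1)} - Y$ and the second is controlled by $L_S \norm{Y_j^{(k)} - Y_j}$.

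\textbf{Key steps, in order.} First, establish that~\eqref{eq:itr_siu} is a well-posed linear system and that $\mathcal Y^{(k)}$ is well-defined for all $k$ — this follows from a Neumann-series/Banach fixed-point argument using $h < \overline h$, and simultaneously one gets the a priori bound $\norm{Y_j^{(k)}} \le$ (something comparable to $C$); here the definition of $M_S$ with the term $L_S \norm{Y_i^{(0)} - Y_i}$ is tailored so that $\norm{S(Y_j^{(k)})}_2 \le M_S$ holds uniformly in $k$ once one knows $\norm{Y_j^{(k)} - Y_j} \le \norm{Y_j^{(0)} - Y_j}$ (a consequence of the contraction being $< 1$). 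Second, from the subtracted identity take $\norm{\cdot}$, use $\norm{a_{ij}} \le \norm{A}_\infty$ summed over $j$, $\norm{Q\cdot}_2$-bounds, $\norm{S(Y_j^{(k)})}_2 \le M_S$, and $\norm{S(Y_j^{(k)}) - S(Y_j)}_2 \le L_S \norm{Y_j^{(k)} - Y_j}$ together with $\norm{Y_j} \le C$, to arrive at
\begin{equation*}
    \max_{i} \norm*{Y_i^{(k+1)} - Y_i} \le h \norm{A}_\infty \norm{Q}_2 M_S \max_i \norm*{Y_i^{(k+1)} - Y_i} + h \norm{A}_\infty \norm{Q}_2 L_S C \max_i \norm*{Y_i^{(k)} - Y_i}.
\end{equation*}
Third, absorb the first term on the left (legitimate because $h \norm{A}_\infty \norm{Q}_2 M_S < h \norm{A}_\infty \norm{Q}_2 (L_S C + M_S) < 1$ by the hypothesis on $\overline h$), which yields exactly~\eqref{ineq:conv}. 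Fourth, observe that the contraction factor $h \norm{A}_\infty \norm{Q}_2 L_S C / (1 - h \norm{A}_\infty \norm{Q}_2 M_S)$ is $< 1$ under the same hypothesis, so $\max_i \norm{Y_i^{(k)} - Y_i} \to 0$ geometrically, giving the final "converge linearly'' claim.

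\textbf{Main obstacle.} The only genuinely delicate point is the bookkeeping needed to make the uniform bound $\norm{S(Y_j^{(k)})}_2 \le M_S$ hold for \emph{every} $k$, since $M_S$ is defined via the \emph{initial} deviation $\norm{Y_i^{(0)} - Y_i}$. This requires an induction: assuming $\norm{Y_i^{(k)} - Y_i} \le \norm{Y_i^{(0)} - Y_i}$, one derives~\eqref{ineq:conv} with a factor $< 1$, hence $\norm{Y_i^{(k+1)} - Y_i} \le \norm{Y_i^{(0)} - Y_i}$, closing the loop; the base case $k=0$ is trivial. Everything else — the solvability of the linear stage system and the norm estimates — is routine once the gradient is replaced by $Q\,(\cdot)$ and one tracks constants carefully. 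I would present the induction first, then the chain of inequalities, then the absorption step, and finish with the geometric-convergence remark.
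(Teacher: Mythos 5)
Your proposal is correct and follows essentially the same route as the paper: the same subtraction-and-splitting of $S(Y_j^{(k)})QY_j^{(k+1)} - S(Y_j)QY_j$, the same absorption of the implicit term using $h<\overline h$, and the same induction establishing the uniform bound $\norm{S(Y_i^{(k)})}_2\le M_S$ from the fact that the contraction factor is less than one. The only cosmetic difference is that the paper runs the induction on the quantity $M_S^{(k)}=\max_i\norm{S(Y_i^{(k)})}_2\le M_S$ rather than directly on $\max_i\norm{Y_i^{(k)}-Y_i}$, which is equivalent.
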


\begin{proof}
    By using $ M_S^{(k)} = \max_{i \in [s] } \norm*{ S \paren[\Big]{Y^{(k)}_i} }_2 $, we see
    \begin{align}
        \max_{i \in [s]} \norm*{ Y^{(k+1)}_i - Y_i }
        &= h \max_{i \in [s]} \norm*{ \sum_{ j \in [s]} a_{ij} \paren*{ S \paren*{ Y^{(k)}_j } Q Y^{ (k+1) }_j - S \paren*{ Y_j } Q Y_j } } \\
        &\le h \norm*{ A }_{\infty} \max_{ j \in [s]} \norm*{ S \paren*{ Y^{(k)}_j } Q Y^{ (k+1) }_j - S \paren*{ Y_j } Q Y_j } \\
        &\le h \norm*{A}_{\infty} \left( \max_{j \in [s]} \norm*{ S \paren*{ Y^{(k)}_j } Q Y^{ (k+1) }_j - S \paren*{ Y^{(k)}_j } Q Y_j } \right. \\
        &\qquad \qquad + \left. \max_{ j \in [s]} \norm*{ S \paren*{ Y^{(k)}_j } Q Y_j - S \paren*{ Y_j } Q Y_j } \right) \\
        &\le h \norm*{A}_{\infty} \norm*{Q}_2 \left( M_S^{(k)} \max_{ i \in [s] } \norm*{ Y^{(k+1)}_i - Y_i } + L_S C \max_{ i \in [s] } \norm*{ Y^{(k)}_i - Y_i }  \right), 
    \end{align}
    which proves that
    \begin{equation}\label{ineq:conv_k}
        \max_{i \in [s]} \norm*{ Y^{(k+1)}_i - Y_i } \le \frac{ h \norm*{ A }_{\infty} \norm*{Q}_2 L_S C }{1-h \norm*{ A }_{\infty} \norm*{Q}_2 M_S^{(k)} } \max_{ i \in [s] } \norm*{ Y^{(k)}_i - Y_i }
    \end{equation}
    holds if $ h < \overline{h}^{(k)} := \paren*{ \norm*{A}_{\infty} \norm*{Q}_2 M_S^{(k)} }^{-1} $. 
    
    To bridge the gap between \eqref{ineq:conv_k} and \eqref{ineq:conv}, it is sufficient to prove $ M_S^{(k)} \le M_S \ ( k = 0 , 1,\dots ) $  
    (notice that $ M_S^{(k)} \le M_s $ implies $ \overline{h} < \overline{h}^{(k)} $). 
    Here, $ M_S^{(k)} \le M_S $ is proved by induction. 
    When $ k = 0$, $ M_S^{(0)} \le M_S $ holds  
    by Lipschitz continuity of $ S $. 
    Then, assuming $ M_S^{(\ell)} \le M_S $ for $ \ell = 0,1,\dots, k $, we prove $ M_S^{(k+1)} \le M_S$. 
    By the induction assumption, we see $ h < \overline{h} <  \overline{h}^{(\ell)} $, which enables us to use the inequality~\eqref{ineq:conv_k}, i.e. for $ \ell = 0,1,\dots ,k $, 
    \begin{equation}
        \max_{ i \in [s] } \norm*{ Y_i^{(\ell+1)} - Y_i } \le \frac{ h \norm*{ A }_{\infty} \norm*{Q}_2 L_S C }{1-h \norm*{ A }_{\infty} \norm*{Q}_2 M_S^{(\ell)} } \max_{ i \in [s] } \norm*{ Y^{(\ell)}_i - Y_i } < \max_{i \in [s] } \norm*{ Y_i^{(\ell)} - Y_i }  
    \end{equation}
    holds, where the last inequality holds because of $ h < \overline{h}$. Therefore, we obtain
    \begin{align}
        M_S^{(k+1)} 
        &\le \max_{ i \in [s] }\paren*{ \norm*{  S \paren*{ Y^{(k+1)}_i } -  S \paren*{ Y_i }  }_2 + \norm*{  S \paren*{ Y_i } }_2 } \\
        &\le \max_{ i \in [s] }\paren*{ L_S \norm*{  Y^{(k+1)}_i -  Y_i  } + \norm*{  S \paren*{ Y_i } }_2 } \\
        &< \max_{ i \in [s] }\paren*{ L_S \norm*{  Y^{(0)}_i -  Y_i  } + \norm*{  S \paren*{ Y_i } }_2 } = M_S, 
    \end{align}
    which completes the induction. 
\end{proof}

\subsection{Accuracy of semi-implicit iterative schemes}
\label{itr:acc}

Here, we prove that, if we iterate the procedure more, 
the resulting numerical solution $ y^{(k)}_1 $ will be more accurate upto the accuracy of the base RK method. 

\begin{theorem}[Accuracy of semi-implicit iteration schemes]\label{thm:err_si_itr}
Assume the following conditions: 
\begin{description}
    \item[(A1)] $ Y^{(0)}_i $ satisfies $ \norm[\big]{ Y^{(0)}_i - y( c_i h ) } \le C h^{q} $ for each $ i \in [s] ${\em ;} 
    \item[(A2)] $ S : \RR^d \to \RR^{d \times d} $ is Lipschitz continuous{\em ;}
    \item[(A3)] the base Runge--Kutta method has order $p${\em .}
\end{description}
Then, the numerical solution $ y^{(k)}_1 $ of the semi-implicit iterative scheme satisfies 
\begin{equation}\label{eq:acc_itr}
    \norm*{ y^{(k)}_1 - y (h) } \le C' h^{ \min\{ p , q + k-1\} +1 }
\end{equation}
for sufficiently small $h > 0$, i.e. the scheme with $k$ iteration has order $ \min\{ p , q + k-1\} $. 
Here, $ C' $ is a constant depending only on the exact solution $y$, $k$, $S$, $Q$, $A$, $b$ and the constant $C$ in (A1).   
\end{theorem}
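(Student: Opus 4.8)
The plan is to compare $y_1^{(k)}$ with the numerical solution $y_1$ produced by the \emph{fully implicit} base Runge--Kutta method (same $A,b$) applied to~\eqref{eq:grad_sys}, and to exploit the geometric contraction already established in \cref{thm:conv_si}. Write $\norm{y_1^{(k)}-y(h)}\le\norm{y_1^{(k)}-y_1}+\norm{y_1-y(h)}$. Assumption~(A3) gives $\norm{y_1-y(h)}\le Ch^{p+1}$, which accounts for the $p$-branch of the claimed order, so everything reduces to establishing $\norm{y_1^{(k)}-y_1}\le C'h^{q+k}$, i.e.\ the $(q+k-1)$-branch.

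For the latter I would run the recursion on the stage deviations $\Delta^{(j)}:=\max_{i\in[s]}\norm{Y_i^{(j)}-Y_i}$. Subtracting the stage relation of the base method from~\eqref{eq:itr_siu}, using $\nabla V(y)=Qy$, inserting the intermediate term $S(Y_j^{(k-1)})QY_j$, and applying the Lipschitz bound~(A2) --- this is exactly the computation behind \cref{thm:conv_si} --- gives, for $h$ below the threshold there, $\Delta^{(k)}\le\kappa h\,\Delta^{(k-1)}$ with $\kappa$ independent of $h$ and $k$, hence $\Delta^{(k)}\le(\kappa h)^k\Delta^{(0)}$. The same manipulation on the output line of \cref{def:si} yields $\norm{y_1^{(k)}-y_1}\le Ch\,(\Delta^{(k)}+\Delta^{(k-1)})\le C'h\,\Delta^{(k-1)}\le C'h^{k}\Delta^{(0)}$. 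Thus everything comes down to showing $\Delta^{(0)}=O(h^{q})$ and, more precisely, to tracking how the order improves from iteration to iteration so that the final exponent is exactly $q+k$.

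To control $\Delta^{(0)}$ and to run the iteration, I would use the reformulation from the proof of \cref{thm:cl}: at each step the iterates are exactly the base method applied to the modified equation $\dot v=S(\widehat y(t))Qv$, where $\widehat y$ is a smooth interpolant of the predictors in use. Choosing $\widehat y=y+w$ with $w$ the polynomial through the nodal deviations (so that $\norm{\widehat y-y}_{L^\infty[0,h]}$ has the same order as those deviations, which is $O(h^q)$ by~(A1)), a Gr\"onwall comparison of $\dot v=S(\widehat y)Qv$ with~\eqref{eq:grad_sys}, together with the order-$p$ bound for the base method applied to the modified equation, controls both the output and the stages. Feeding this into an induction on $k$ --- predictors accurate to order $m$ at step $k-1$ producing stages accurate to order $m+1$ and an output accurate to order $\min\{p,m\}$ at step $k$ --- with base case $k=1$ being precisely \cref{thm:acc_one-step}, yields $\norm{y_1^{(k)}-y(h)}\le C'(h^{q+k}+h^{p+1})$, i.e.\ order $\min\{p,q+k-1\}$. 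The ``sufficiently small $h$'' is the threshold of \cref{thm:conv_si} together with unique solvability of each linear stage system, and $C'$ depends on $k$ only through the accumulated $(\kappa h)^{k}$-type factor.

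The main obstacle is this last bookkeeping: one must verify that each iteration raises the local order by exactly one, all the way up to the cap $p$, and, crucially, that the generally lower \emph{stage} order of the base Runge--Kutta method does not leak into the final estimate. This is exactly why the modified-ODE recast is needed --- it allows invoking the order-$p$ bound for the base method \emph{on the modified equation} rather than the weaker stage-order bound --- and it must be combined carefully with the contraction of \cref{thm:conv_si} and with an inductive hypothesis that controls the \emph{uniform} (not merely nodal) distance of the interpolated predictor from $y$ on $[0,h]$. Making that uniform control sharp while keeping every constant independent of $h$ is the delicate point; the remaining Lipschitz and Gr\"onwall estimates are of the same routine type already used in the proof of \cref{thm:conv_si}.
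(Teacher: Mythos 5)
There is a genuine gap, and it sits exactly at the point you yourself flag as ``the delicate point'': neither of your two mechanisms actually prevents the stage order of the base method from leaking into the final estimate. In your first route, $\Delta^{(0)}=\max_{i}\norm{Y_i^{(0)}-Y_i}$ compares the predictors with the \emph{stages} $Y_i$ of the base method, and $\norm{Y_i-y(c_ih)}$ is only $O(h^{r+1})$ with $r$ the stage order (e.g.\ $r=s$ for the $s$-stage Gauss method), so $\Delta^{(0)}=O(h^{\min\{q,r+1\}})$ rather than $O(h^{q})$; for the three-stage Gauss method with a CERK predictor ($p=6$, $q=6$, $k=1$) this yields $\norm{y_1^{(1)}-y_1}=O(h^{5})$ where $O(h^{7})$ is required. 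Your second route has the same defect one level up: recasting a \emph{single} iteration as the base method applied to $\dot v=S(\widehat y)Qv$ controls the \emph{output} of that iteration to order $p$, but the \emph{stages} $Y_i^{(k)}$ it produces approximate the exact solution of that modified equation only to the stage order, so the induction step ``predictors of order $m$ produce stages of order $m+1$'' fails once $m\ge r+1$; the achievable exponent saturates at $\min\{p+1,\,r+2,\,q+k\}$ instead of $\min\{p+1,\,q+k\}$ (visible, e.g., for the Euler predictor with the three-stage Gauss method at $k=4,5$).

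The paper's proof removes this obstruction by one global recast rather than $k$ sequential ones: all $k$ iterations together constitute a \emph{single} application of the base Runge--Kutta method to the $k$-fold cascaded system \eqref{eq:acc_system}, with $y^{(0)}$ a smooth interpolant of the predictors. Assumption (A3) then bounds $\norm{y_1^{(k)}-y^{(k)}(h)}$ by $Ch^{p+1}$ directly, with no reference whatsoever to stage accuracy, and the only analytic work left concerns the \emph{exact} solutions of the cascade: a Gr\"onwall-type induction showing $\sup_{t\in[0,h]}\norm{y^{(j)}(t)-y(t)}\le C^{(j)}h^{q+j}$, each level gaining one power of $h$ because the coefficient error is integrated over an interval of length $h$. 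That last estimate is essentially the comparison you sketch, so your plan is one idea away from the paper's: replace the per-iteration modified equation by the full cascade, so that the order-$p$ bound is invoked only once, on the final component, and the contraction result of \cref{thm:conv_si} is not needed at all.
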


\begin{proof}
Due to the assumption (A1), there exists a smooth function $ y^{(0)} : [0,h] \to \RR^d $ satisfying $ y^{(0)} (c_i h ) = Y^{(0)}_i $ and $ \sup_{ t \in [0,h ]} \norm{ y^{(0)} (t) - y (t) }  \le C h^{q} $. 
Then, the semi-implicit iterative scheme can be regarded as the usual Runge--Kutta method corresponding to $A, b$ for the system
\begin{equation}\label{eq:acc_system}
    \begin{cases}
        \dot{y}^{(1)} = S \paren*{ y^{(0)} } \nabla V \paren*{ y^{(1)} }, \\[10pt]
        \dot{y}^{(2)} = S \paren*{ y^{(1)} } \nabla V \paren*{ y^{(2)} }, \\
        \qquad \vdots \\
        \dot{y}^{(k)} = S \paren*{ y^{(k-1)} } \nabla V \paren*{ y^{(k)} }. 
    \end{cases}
\end{equation}
Thus, thanks to the assumption (A3), 
it is sufficient to prove $ \norm[\big]{ y^{(k)} (h) - y(h) } \le C^{(k)} h^{q+k} $. 

To this end, 
we prove 
$$ \sup_{ t \in [0,h]} \norm[\big]{ y^{(j)} (t) - y (t) } \le C^{(j)} h^{q+j} \qquad ( j = 1,\dots, k )$$
by induction, where $ C^{(j)} := \paren*{ 2 L_S \norm*{Q}_2 C_y }^{j} C \ ( C_y := \sup_{ t \in [0,h] } \norm*{ y(t) })$. 
We assume that $ \sup_{ t \in [0,h]} \norm[\big]{ y^{(j-1)} (t) - y (t) } \le C^{(j-1)} h^{q+j-1} $ holds so that 
\begin{align} M_S^{(j-1)} 
&:= \sup_{ t \in [0,h] } \norm*{ S\paren*{ y^{(j-1)} (t) } }_2\\
&\le \norm*{ S(y_0) }_2 + L_S \paren[\big]{ C^{(j-1)} h^{q+j-1} + \sup_{t \in [0,h] } \norm*{ y(t) - y_0 } }\\
&< \infty .  
\end{align}
Then, we see
\begin{align}
    \sup_{ t \in [0,h]} \norm*{ y^{(j)} (t) - y (t) } 
    &= \sup_{ t \in [0,h]} \norm*{ \int^t_0 \paren*{ S \paren*{ y^{(j-1)} (r) } Q y^{(j)} (r) - S (y(r)) Q y (r) } \rd r } \\
    &\le \sup_{ t \in [0,h] } \norm*{ \int^t_0 \paren*{ S \paren*{ y^{(j-1)} (r) } Q y^{(j)} (r) - S \paren*{y^{(j-1)}(r)} Q y (r) } \rd r } \\
    &\qquad + \sup_{t \in [0,h]} \norm*{ \int^t_0 \paren*{ S \paren*{ y^{(j-1)} (r) } Q y (r) - S (y(r)) Q y (r) } \rd r } \\
    &\le h M_S^{(j-1)} \norm*{Q}_2 \sup_{ t \in [0,h]} \norm*{ y^{(j)} (t) - y(t) } + L_S C^{(j-1)} \norm*{Q}_2 C_y h^{q+j}. 
\end{align}
Since we assume $ h $ is sufficiently small, in particular, $ h \le ( 2 M_S^{(j-1)} \norm*{Q}_2 )^{-1}$ holds, we see
\[ \sup_{ t \in [0,h]} \norm*{ y^{(j)} (t) - y (t) } \le 2 L_S C^{(j-1)} \norm*{Q}_2 C_y h^{q+j}. \]
Thus, by induction, we obtain $\sup_{ t \in [0,h]} \norm[\big]{ y^{(j)} (t) - y (t) } \le C^{(j)} h^{q+j} \ ( j = 1,\dots, k )$. 
Since it implies $ \norm[\big]{ y^{(k)} (h) - y(h) } \le C^{(k)} h^{q+k} $, 
the theorem holds. 
\end{proof}

\subsection{Explicit iteration}
\label{itr:explicit}

The iterative procedure defined in \cref{def:si} requires a linear equation solver at each iteration. 
Thus, the procedure is a bit expensive. 
Since the semi-implicit update~\eqref{eq:itr_siu} is employed to ensure the conservation law, 
a cheaper procedure can be considered as follows. 

\begin{definition}[Iterative scheme (explicit update)]\label{def:ex}
\hfill
\begin{description}
    \item[Step 0] Prepare $ Y^{(0)}_i \approx y (c_i h) $ and set $k = 1$. 
    \item[Step 1] If some criteria hold, go to Step 2. 
        Otherwise, compute $ Y^{(k)}_i $'s by 
        \begin{equation}\label{eq:itr_eu}
            Y^{(k)}_i = y_0 + h \sum_{ j \in [s]} a_{ij} S \paren*{Y^{(k-1)}_j } \nabla V \paren*{ Y^{(k-1)}_j } \qquad ( i \in [s] ). 
        \end{equation}
        Set $ k = k + 1 $ and repeat Step 1. 
    \item[Step 2] Solve
        \begin{equation}
            Y^{(k)}_i = y_0 + h \sum_{ j \in [s]} a_{ij} S \paren*{Y^{(k-1)}_j } \nabla V \paren*{ Y^{(k)}_j } \qquad ( i \in [s])
        \end{equation}
        to obtain $ Y^{(k)}_i $'s and output 
        \[ y_1^{(k)} = y_0 + h \sum_{j \in [s] } b_j S \paren*{Y^{(k-1)}_j } \nabla V \paren*{ Y^{(k)}_j }. \]
\end{description}
\end{definition}

Even when we employ the explicit iteration above, 
the resulting numerical solution preserves the quadratic invariant thanks to the semi-implicit update in Step~1. 
Moreover, the counterparts of \cref{thm:conv_si,thm:err_si_itr} also hold. 

As well as the semi-implicit iteration, the explicit iteration converges as shown below (the proof is omitted). 

\begin{theorem}[Linear convergence of explicit iteration]\label{thm:conv_ei}
    Let $ S : \RR^d \to \RR^{d \times d}$ be the Lipschitz continuous map, and let $ \{ Y^{(k)}_i \}_{k,i} $ be the set of vectors satisfying the relation~\eqref{eq:itr_eu}. 
    Assume that $ h < \overline{h} := \paren*{ \norm{A}_{\infty} \norm*{Q}_2 \paren*{ L_S C + M_S } }^{-1}$ holds, where $ C = \max_{i \in [s]} \norm*{ Y_i } $, $ M_S = \max_{ i \in [s] } \paren*{ L_S \norm*{ Y_i^{(0)} - Y_i } + \norm*{ S(Y_i) }_2 }$, and $ Y_i $'s are the inner stages of the base Runge--Kutta method. 
    Then, for each $k = 1,2,\dots$,
    \begin{equation}\label{ineq:conv_ei}
    \max_{i \in [s]} \norm*{ Y^{(k+1)}_i - Y_i } \le h \norm{A}_{\infty} \norm*{Q}_2 \paren*{ L_S C + M_S } \max_{ i \in [s] } \norm*{ Y^{(k)}_i - Y_i }
    \end{equation}
    holds. 
    In particular, $Y_i^{(k)}$'s converge linearly to $Y_i$'s. 
\end{theorem}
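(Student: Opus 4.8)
The plan is to follow the proof of \cref{thm:conv_si} almost verbatim, the simplification being that the explicit update~\eqref{eq:itr_eu} is a genuine fixed-point iteration: no term involving $Y^{(k+1)}_i$ appears on its right-hand side, so the contraction estimate drops out directly without having to absorb an implicit contribution. Writing $\nabla V(y) = Qy$ and recalling that the inner stages $Y_i$ of the base Runge--Kutta method satisfy $Y_i = y_0 + h \sum_{j \in [s]} a_{ij} S(Y_j) Q Y_j$, I would subtract this identity from~\eqref{eq:itr_eu} to get $Y^{(k+1)}_i - Y_i = h \sum_{j \in [s]} a_{ij} \paren*{ S(Y^{(k)}_j) Q Y^{(k)}_j - S(Y_j) Q Y_j }$ for every $i \in [s]$.

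Next I would split the bracket as $S(Y^{(k)}_j) Q (Y^{(k)}_j - Y_j) + \paren*{ S(Y^{(k)}_j) - S(Y_j) } Q Y_j$ and estimate: taking the $\infty$-norm bound on $A$, using submultiplicativity with $\norm*{Q}_2$, the Lipschitz assumption, and $C = \max_{i \in [s]} \norm*{Y_i}$, one obtains
\[ \max_{i \in [s]} \norm*{ Y^{(k+1)}_i - Y_i } \le h \norm*{A}_{\infty} \norm*{Q}_2 \paren*{ M_S^{(k)} + L_S C } \max_{i \in [s]} \norm*{ Y^{(k)}_i - Y_i }, \]
where $M_S^{(k)} := \max_{i \in [s]} \norm*{ S(Y^{(k)}_i) }_2$. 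To reach the stated bound~\eqref{ineq:conv_ei} it then remains to replace $M_S^{(k)}$ by the iteration-independent quantity $M_S$, i.e. to prove $M_S^{(k)} \le M_S$ for all $k \ge 0$; this is the only step requiring an argument rather than a one-line estimate. As in the semi-implicit case I would do it by induction: the case $k = 0$ is the triangle inequality together with Lipschitz continuity, $\norm*{ S(Y^{(0)}_i) }_2 \le L_S \norm*{ Y^{(0)}_i - Y_i } + \norm*{ S(Y_i) }_2$; and assuming $M_S^{(\ell)} \le M_S$ for $\ell = 0, \dots, k$, the hypothesis $h < \overline{h}$ forces the factor $h \norm*{A}_{\infty} \norm*{Q}_2 \paren*{ M_S^{(\ell)} + L_S C } \le h / \overline{h} < 1$, so the displayed estimate makes $\max_{i \in [s]} \norm*{ Y^{(\ell+1)}_i - Y_i }$ strictly decreasing over $\ell = 0, \dots, k$, hence $\max_{i \in [s]} \norm*{ Y^{(k+1)}_i - Y_i } < \max_{i \in [s]} \norm*{ Y^{(0)}_i - Y_i }$ and therefore $M_S^{(k+1)} \le \max_{i \in [s]} \paren*{ L_S \norm*{ Y^{(k+1)}_i - Y_i } + \norm*{ S(Y_i) }_2 } < M_S$, closing the induction. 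Feeding $M_S^{(k)} \le M_S$ back into the displayed estimate gives~\eqref{ineq:conv_ei}, and since $h \norm*{A}_{\infty} \norm*{Q}_2 \paren*{ L_S C + M_S } = h / \overline{h} < 1$ the sequence $Y^{(k)}_i$ converges linearly to $Y_i$.

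I do not expect a real obstacle here: compared with \cref{thm:conv_si} there is nothing to solve for, so the estimate is if anything cleaner. The only care needed is the bookkeeping in the induction establishing $M_S^{(k)} \le M_S$ and making sure the factors $\norm*{Q}_2$ and $L_S$ land in the right places — the substitution $\nabla V(y) = Qy$ is what produces the single $\norm*{Q}_2$ multiplying both the $M_S^{(k)}$ term and the $L_S C$ term.
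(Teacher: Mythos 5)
Your proof is correct and is essentially the argument the paper intends: the paper omits the proof of \cref{thm:conv_ei} precisely because it is the direct adaptation of the proof of \cref{thm:conv_si}, and your version reproduces that adaptation faithfully — same subtraction against the base Runge--Kutta stages, same splitting of $S(Y^{(k)}_j)QY^{(k)}_j - S(Y_j)QY_j$, and the same induction to replace $M_S^{(k)}$ by $M_S$, with the correct simplification that the explicit update \eqref{eq:itr_eu} leaves no $Y^{(k+1)}_j$ term to absorb, so no denominator $1-h\norm{A}_{\infty}\norm{Q}_2 M_S^{(k)}$ appears. The one blemish — passing from $\max_i\bigl(L_S\norm{Y^{(k+1)}_i-Y_i}+\norm{S(Y_i)}_2\bigr)$ to $M_S$ using only a bound on $\max_i\norm{Y^{(k+1)}_i-Y_i}$, which strictly needs $M_S$ redefined as $L_S\max_i\norm{Y^{(0)}_i-Y_i}+\max_i\norm{S(Y_i)}_2$ — is inherited verbatim from the paper's own proof of \cref{thm:conv_si} and is not a defect you introduced.
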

\begin{remark}
As shown in \cref{thm:conv_si,thm:conv_ei}, the upper bound $ \overline{h} $ for semi-implicit and explicit procedures coincide. 
On the other hand, in view of the rate of convergence, 
the semi-implicit procedure may be superior to the explicit one because 
\begin{equation}
   \frac{ h \norm*{ A }_{\infty} \norm*{Q}_2 L_S C }{1-h \norm*{ A }_{\infty}  \norm*{Q}_2 M_S } < h \norm{A}_{\infty} \norm*{Q}_2 \paren*{ L_S C + M_S }
\end{equation}
holds for any $ h < \overline{h}$. 
\end{remark}

Finally, we show the accuracy of the resulting numerical solution. 
The proof is quite similar to that of \cref{thm:err_si_itr}, but 
we use 
\begin{equation}\label{eq:acc_system_ex}
    \begin{cases}
        \dot{y}^{(1)} = S \paren*{ y^{(0)} } \nabla V \paren*{ y^{(0)} }, \\[10pt]
        \dot{y}^{(2)} = S \paren*{ y^{(1)} } \nabla V \paren*{ y^{(1)} }, \\
        \qquad \vdots \\
        \dot{y}^{(k-1)} = S \paren*{ y^{(k-2)} } \nabla V \paren*{ y^{(k-2)} },\\
        \dot{y}^{(k)} = S \paren*{ y^{(k-1)} } \nabla V \paren*{ y^{(k)}}
    \end{cases}
\end{equation}
instead of \eqref{eq:acc_system}. 

\begin{theorem}[Accuracy of explicit iteration schemes]\label{thm:err_ex_itr}
Assume the following conditions: 
\begin{description}
    \item[(A1)] $ Y^{(0)}_i $ satisfies $ \norm[\big]{ Y^{(0)}_i - y( c_i h ) } \le C h^{q} $ for each $ i \in [s] ${\em ;} 
    \item[(A2)] $ S : \RR^d \to \RR^{d \times d} $ is Lipschitz continuous{\em ;}
    \item[(A3)] the base Runge--Kutta method has order $p${\em .}
\end{description}
Then, the numerical solution $ y^{(k)}_1 $ of the explicit iterative scheme satisfies 
\begin{equation}\label{eq:acc_itr_ex}
    \norm*{ y^{(k)}_1 - y (h) } \le C' h^{ \min\{ p , q + k-1\} +1 }
\end{equation}
for sufficiently small $h > 0$, i.e. the scheme with $k$ iteration has order $ \min\{ p , q + k-1\} $. 
Here, $ C' $ is a constant depending only on the exact solution $y$, $k$, $S$, $Q$, $A$, $b$ and the constant $C$ in (A1).   
\end{theorem}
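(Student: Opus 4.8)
The plan is to mirror the proof of \cref{thm:err_si_itr} almost verbatim, with the cascade system~\eqref{eq:acc_system} replaced by~\eqref{eq:acc_system_ex}. First, using (A1), we fix a smooth function $y^{(0)}\colon[0,h]\to\RR^d$ with $y^{(0)}(c_i h)=Y^{(0)}_i$ and $\sup_{t\in[0,h]}\|y^{(0)}(t)-y(t)\|\le C h^{q}$. The crucial observation is that the explicit iterative scheme of \cref{def:ex} is, by construction, exactly the base Runge--Kutta method $(A,b)$ applied to the coupled system~\eqref{eq:acc_system_ex}: the explicit updates~\eqref{eq:itr_eu} reproduce the stage equations for the components $y^{(1)},\dots,y^{(k-1)}$, whose right-hand sides depend on the immediately preceding component only and are handled through the stage values fed forward by the coupling, while Step~2 of \cref{def:ex} reproduces the stage equations and the output formula for the final, semi-implicit component $y^{(k)}$. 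Once this identification is made, (A3) gives the local error bound $\|y^{(k)}_1-y^{(k)}(h)\|\le C'' h^{p+1}$, and it remains only to estimate the drift $\|y^{(k)}(h)-y(h)\|$ of the exact solution of~\eqref{eq:acc_system_ex} away from $y$; the triangle inequality then yields~\eqref{eq:acc_itr_ex}.

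Next, we would prove $\sup_{t\in[0,h]}\|y^{(j)}(t)-y(t)\|\le C^{(j)}h^{q+j}$ by induction on $j=1,\dots,k$, with base case $j=0$ supplied by (A1) and each $C^{(j)}$ a bounded multiple of $C^{(j-1)}$. For $j<k$ the component $y^{(j)}$ is a plain quadrature, $y^{(j)}(t)-y(t)=\int_0^t\big(S(y^{(j-1)})Qy^{(j-1)}-S(y)Qy\big)\,\rd r$; splitting the integrand as $S(y^{(j-1)})Q(y^{(j-1)}-y)+\big(S(y^{(j-1)})-S(y)\big)Qy$ and invoking (A2), the induction hypothesis, and the finiteness of $M_S^{(j-1)}:=\sup_t\|S(y^{(j-1)}(t))\|_2$ (bounded by $\|S(y_0)\|_2+L_S(C^{(j-1)}h^{q+j-1}+\sup_t\|y(t)-y_0\|)$) gains one power of $h$ with no absorption needed. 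For $j=k$ the equation $\dot{y}^{(k)}=S(y^{(k-1)})Qy^{(k)}$ is semi-implicit, so we would carry the $y^{(k)}$ term on both sides, bound $\int_0^t S(y^{(k-1)})Q(y^{(k)}-y)\,\rd r$ by $h M_S^{(k-1)}\|Q\|_2\sup_t\|y^{(k)}(t)-y(t)\|$, and absorb it into the left-hand side for $h$ small, say $h\le(2M_S^{(k-1)}\|Q\|_2)^{-1}$ --- exactly the step used in the proof of \cref{thm:err_si_itr}.

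I do not expect a genuine obstacle here, since the argument is structurally identical to that of \cref{thm:err_si_itr}; replacing the first $k-1$ semi-implicit solves by the cheaper explicit quadratures~\eqref{eq:itr_eu} in fact simplifies the estimates. The one point that must be checked carefully is the very first step --- that the explicit iteration genuinely coincides with the base Runge--Kutta method on~\eqref{eq:acc_system_ex}, and in particular that each explicit update still contributes exactly one extra order of accuracy rather than stalling at $q$; the induction above confirms this, so that after $k$ iterations one gains order $\min\{p,q+k-1\}$. Tracking the constants through the induction and the triangle inequality then gives $C'$ depending only on $y$, $k$, $S$, $Q$, $A$, $b$, and the constant $C$ in (A1), as claimed.
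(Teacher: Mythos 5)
Your proposal is correct and follows essentially the same route the paper indicates: the paper itself only remarks that the proof of \cref{thm:err_ex_itr} mirrors that of \cref{thm:err_si_itr} with the cascade system~\eqref{eq:acc_system_ex} in place of~\eqref{eq:acc_system}, and your sketch fills in exactly those details (identification with the base RK method applied to~\eqref{eq:acc_system_ex}, induction gaining one power of $h$ per component, with absorption only needed for the final semi-implicit component).
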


\section{Numerical Experiments}
\label{sec:ne}

In this section, we numerically examine the proposed schemes. 
We employ the $6$th order Gauss method
\begingroup
\renewcommand{\arraystretch}{1.3}
\renewcommand{\arraycolsep}{4pt}
\[ 
\begin{array}{c|ccc} \frac{1}{2}-\frac{\sqrt{15}}{10} & \frac{5}{36} & \frac{2}{9}-\frac{\sqrt{15}}{15} & \frac{5}{36}-\frac{\sqrt{15}}{30} \\ \frac{1}{2} & \frac{5}{36}+\frac{\sqrt{15}}{24} & \frac{2}{9} & \frac{5}{36}-\frac{\sqrt{15}}{24} \\ \frac{1}{2}+\frac{\sqrt{15}}{10} & \frac{5}{36}+\frac{\sqrt{15}}{30} & \frac{2}{9}+\frac{\sqrt{15}}{15} & \frac{5}{36} \\ \hline & \frac{5}{18} & \frac{4}{9} & \frac{5}{18} \end{array}. \]
\endgroup
as the base Runge--Kutta method.
We compare three methods for preparing $Y^{(0)}_i$'s (see \cref{proposed:general_predictors}): 
\begin{itemize}
    \item $5$th order CERK (Continuous Explicit Runge--Kutta) method. 
    \item Extrapolation using inner stages in the previous step. 
    \item Locally second order approximation by the explicit Euler method ($ Y^{(0)}_i = y_0 + c_i h S(y_0) \nabla V(y_0) $). 
\end{itemize}
Then, \cref{thm:err_si_itr,thm:err_ex_itr} provide us with the accuracy of each method as shown in \cref{tab:nm_ac}. 

\begin{table}[ht]
\caption{Accuracy of each numerical method ensured by \cref{thm:err_si_itr,thm:err_ex_itr}. }
\label{tab:nm_ac}
\centering
\begingroup
\renewcommand{\arraystretch}{1.1}
\renewcommand{\arraycolsep}{4pt}
\begin{tabular}{c|ccccc}
    \hline
                           & \multicolumn{5}{c}{Iteration} \\
                           & $1$ & $2$ & $3$ & $4$ & $>5$ \\ \hline 
    CERK ($q=6$)           & $6$ & $6$ & $6$ & $6$ & $6$ \\
    extrapolation ($q=4$)  & $4$ & $5$ & $6$ & $6$ & $6$ \\
    Euler ($q=2$) & $2$ & $3$ & $4$ & $5$ & $6$ \\ \hline
\end{tabular}
\endgroup
\end{table}

We confirm that each numerical scheme certainly achieves the accuracy given in \cref{tab:nm_ac} by using the Euler equation (\cref{ne:Euler}), the Kepler equation (\cref{ne:Kepler}), and the Korteweg--de Vries equation (\cref{ne:KdV}) as examples. 
In addition, we confirm the conservation law in \cref{ne:Euler}, 
and the long time behaviour in \cref{ne:Kepler}. 

\subsection{Euler equation}
\label{ne:Euler}

First, we consider the Euler equation for rigid body rotation
\begingroup
\renewcommand{\arraystretch}{1.1}
\renewcommand{\arraycolsep}{3pt}
\begin{equation}\label{eq:Euler}
\dot{y} = S(y) \nabla H(y), \quad S(y) = \begin{pmatrix} 0 & \alpha y_3 & -\beta y_2 \\ - \alpha y_3 & 0 & y_1 \\ \beta y_2 & - y_1 & 0 \end{pmatrix}, \quad H (y) = \frac{y_1^2+y_2^2+y_3^2}{2}, 
\end{equation}
\endgroup
where $ \alpha $ and $ \beta $ are parameters. 
We use the initial value $ y_0 = (0,1,1)^{\trans} $ and parameters $ \alpha = 1 + (1/\sqrt{1.51})$, $ \beta = 1 - (0.51/\sqrt{1.51})$, which are employed in \cite{M2015}. The exact solution is periodic with the period $ T = 4 K (0.51) \approx 7.450563209330954 $. 

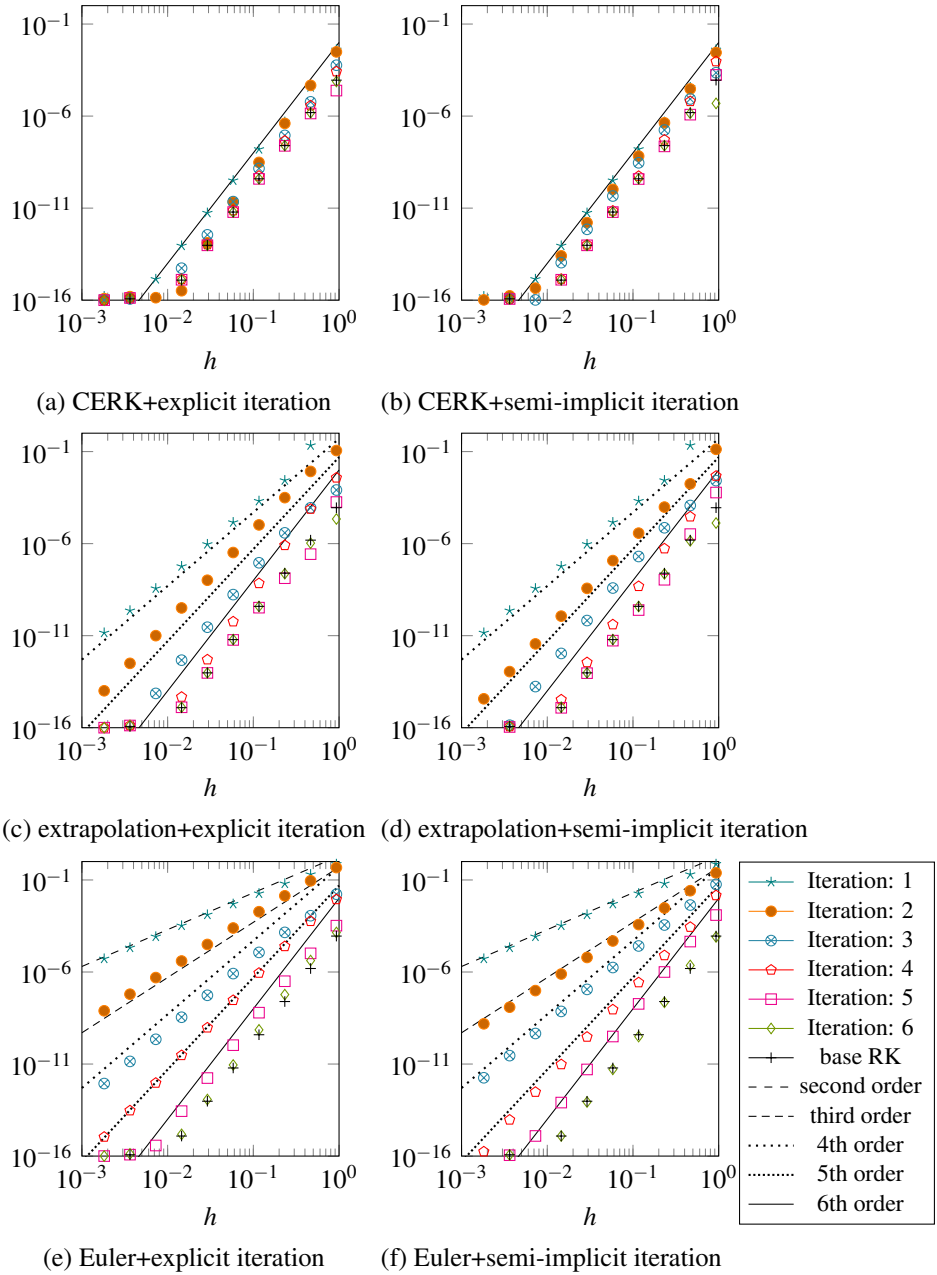
\begin{figure}[thp]
    \begin{minipage}{0.4\textwidth}
    \centering
    
    \begin{tikzpicture}
    \begin{loglogaxis}[compat = newest, width = 5cm, height = 5.5cm, xmin = 1e-3, xmax = 1, ymin = 1e-16, ymax = 1,legend pos=north west, xlabel={$h$},cycle list name=mylist]
        \foreach \x in {1,...,6} {
        \addplot table[y index = \x, only marks] {data/Euler200817/errorsCERKexplicit.dat};
    }
    \addplot[black,mark = +] table [only marks] {data/Euler200817/errorsRK.dat};
    \addplot[black,domain=1e-3:1] {0.01*x^6};
    \end{loglogaxis}
    \end{tikzpicture}
    
    (a) CERK+explicit iteration
    \end{minipage}
    \begin{minipage}{0.59\textwidth}
    \begin{tikzpicture}
    \begin{loglogaxis}[compat = newest, width = 5cm, height = 5.5cm, xmin = 1e-3, xmax = 1, ymin = 1e-16, ymax = 1,legend pos=north west, xlabel={$h$},cycle list name=mylist]
        \foreach \x in {1,...,6} {
        \addplot table[y index = \x, only marks] {data/Euler200817/errorsCERKsemiimplicit.dat};
    }
    \addplot[black,mark = +] table [only marks] {data/Euler200817/errorsRK6.dat};
    \addplot[black,domain=1e-3:1] {0.01*x^6};
    \end{loglogaxis}
    \end{tikzpicture}
    
    (b) CERK+semi-implicit iteration
    \end{minipage}\\[5pt]
    
    \begin{minipage}{0.4\textwidth}
    \centering
    
    \begin{tikzpicture}
    \begin{loglogaxis}[compat = newest, width = 5cm, height = 5.5cm, xmin = 1e-3, xmax = 1, ymin = 1e-16, ymax = 1,legend pos=north west, xlabel={$h$},cycle list name=mylist]
        \foreach \x in {1,...,6} {
        \addplot table[y index = \x, only marks] {data/Euler200817/errorsExexplicit.dat};
    }
    \addplot[black,mark = +] table [only marks] {data/Euler200817/errorsRK6.dat};
    \addplot[black,dotted,thick,domain=1e-3:1] {0.5*x^4};
    \addplot[black,densely dotted,thick,domain=1e-3:1] {0.05*x^5};
    \addplot[black,domain=1e-3:1] {0.01*x^6};
    \end{loglogaxis}
    \end{tikzpicture}
    
    (c) extrapolation+explicit iteration
    \end{minipage}
    \begin{minipage}{0.59\textwidth}
    \begin{tikzpicture}
    \begin{loglogaxis}[compat = newest, width = 5cm, height = 5.5cm, xmin = 1e-3, xmax = 1, ymin = 1e-16, ymax = 1,legend pos=north west, xlabel={$h$},cycle list name=mylist]
        \foreach \x in {1,...,6} {
        \addplot table[y index = \x, only marks] {data/Euler200817/errorsExsemiimplicit.dat};
    }
    \addplot[black,mark = +] table [only marks] {data/Euler200817/errorsRK.dat};
    \addplot[black,dotted,thick,domain=1e-3:1] {0.5*x^4};
    \addplot[black,densely dotted,thick,domain=1e-3:1] {0.05*x^5};
    \addplot[black,domain=1e-3:1] {0.01*x^6};
    \end{loglogaxis}
    \end{tikzpicture}
    
    (d) extrapolation+semi-implicit iteration
    \end{minipage}\\[5pt]
    
    \begin{minipage}{0.4\textwidth}
    \centering
    
    \begin{tikzpicture}
    \begin{loglogaxis}[compat = newest, width = 5cm, height = 5.5cm, xmin = 1e-3, xmax = 1, ymin = 1e-16, ymax = 1, xlabel={$h$},cycle list name=mylist]
        \foreach \x in {1,...,6} {
        \addplot table[y index = \x,only marks] {data/Euler200817/errorsACEX2explicit.dat};
    }
    \addplot[black,mark = +] table [only marks] {data/Euler200817/errorsRK.dat};
    \addplot[black,dashed,domain=1e-3:1] {2*x^2};
    \addplot[black,densely dashed,domain=1e-3:1] {0.5*x^3};
    \addplot[black,dotted,thick,domain=1e-3:1] {0.5*x^4};
    \addplot[black,densely dotted,thick,domain=1e-3:1] {0.05*x^5};
    \addplot[black,domain=1e-3:1] {0.01*x^6};
    \end{loglogaxis}
    \end{tikzpicture}
    
    (e) Euler+explicit iteration
    \end{minipage}
    \begin{minipage}{0.59\textwidth}
            
    \begin{tikzpicture}
    \begin{loglogaxis}[compat = newest, width = 5cm, height = 5.5cm, xmin = 1e-3, xmax = 1, ymin = 1e-16, ymax = 1,xlabel={$h$},cycle list name=mylist,legend style={nodes={scale=0.9, transform shape}, at={(1.46,1)},anchor=north,legend columns=1}]
        \foreach \x in {1,...,6} {
        \edef\temp{\noexpand\addlegendentry{Iteration: \x}}
        \addplot table[y index = \x, only marks] {data/Euler200817/errorsACEX2semiimplicit.dat};
        \temp
    }
    \addplot[black,mark = +] table [only marks] {data/Euler200817/errorsRK6.dat};
    \addlegendentry{base RK};
    \addplot[black,dashed,domain=1e-3:1] {2*x^2};
    \addlegendentry{second order};
    \addplot[black,densely dashed,domain=1e-3:1] {0.5*x^3};
    \addlegendentry{third order};
    \addplot[black,dotted,thick,domain=1e-3:1] {0.5*x^4};
    \addlegendentry{4th order};
    \addplot[black,densely dotted,thick,domain=1e-3:1] {0.05*x^5};
    \addlegendentry{5th order};
    \addplot[black,domain=1e-3:1] {0.01*x^6};
    \addlegendentry{6th order};
    \end{loglogaxis}
    \end{tikzpicture}
    
    (f) Euler+semi-implicit iteration
    \end{minipage}
    
    \caption{Relative errors of numerical solutions for the Euler equation~\eqref{eq:Euler}.}
    \label{fig:eul_errors}
\end{figure}

The relative errors of the numerical solutions are summarised in \cref{fig:eul_errors}. 
They decrease along the reference lines drawn based on \cref{tab:nm_ac}. 
In this case, there are no serious difference between explicit and semi-implicit iterations. 

\begin{figure}[htp]
    \centering
    \includegraphics[scale=0.9]{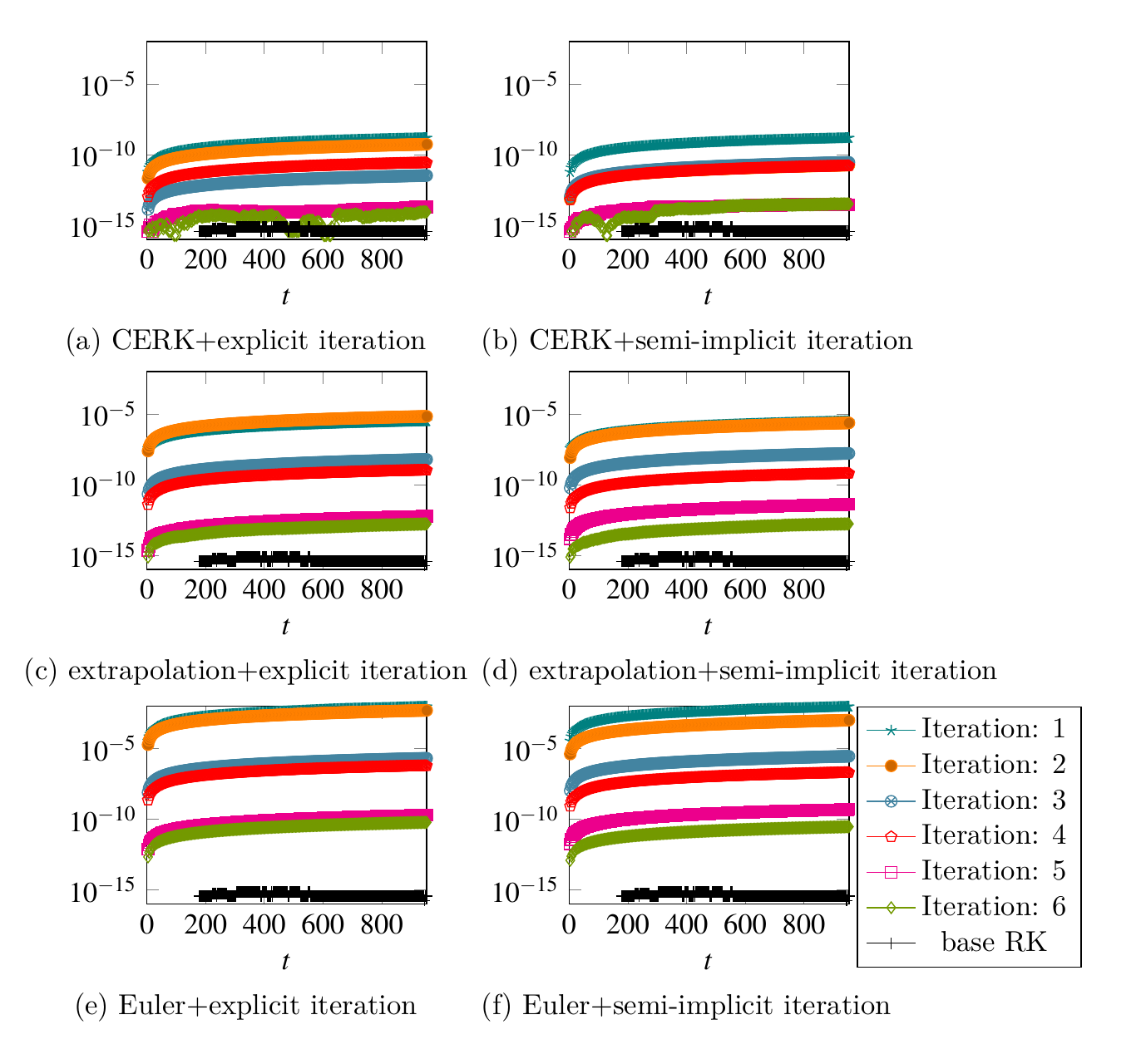}
    \vspace{-10pt}
    
    \caption{Evolution of relative errors of the invariant of $I(y) = (y_1^2 + \beta y_2^2 + \alpha y_3^2 )/2$ of the Euler equation ($ h = 4K(0.51)/128 \approx 0.0582 $). Markers are for every 64 points.}
    \label{fig:eul_I}
\end{figure}

In order to confirm the preservation of the invariant $H$, 
we conducted the numerical experiment with 128 periods and $ h = 4K(0.51)/128 \approx 0.0582$. 
As expected, the relative errors of the invariants are the order of $ 10^{-14} $ even in such long numerical simulations (see \cref{fig:eul_H} in appendix). 

The relative errors of another quadratic invariant $I(y) = (y_1^2 + \beta y_2^2 + \alpha y_3^2 )/2$ are summarised in \cref{fig:eul_I}. 
Note that, the Gauss method, the base RK method, preserves all quadratic invariants. 
On the other hand, the proposed schemes do not exactly preserve $I$. 
However, since the numerical solutions of the proposed schemes converge to that of the Gauss method, 
the relative error of $I$ tends to decrease as the number of iterations increases.

\subsection{Kepler equation}
\label{ne:Kepler}

Next, we consider the Kepler equation
\begingroup
\renewcommand{\arraystretch}{1.1}
\renewcommand{\arraycolsep}{3pt}
\begin{equation}\label{eq:Kepler}
\dot{y} = S(y) \nabla V(y), \quad S(y) = \begin{pmatrix} 0 & -1 & 0 & 0 \\ 1 & 0 & 0 & 0 \\ 0 & 0 & 0 & - \frac{1}{ \paren*{ y_1^2 +y_2^2 }^{3/2}} \\ 0 & 0 & \frac{1}{ \paren*{ y_1^2 +y_2^2 }^{3/2}} & 0 \end{pmatrix}, \quad V (y) = y_1 y_4 - y_2 y_3
\end{equation}
\endgroup
with the initial condition $ y_0 = (1-e,0,0,\sqrt{ (1+e)/(1-e) })^{\trans} $. 
The exact solution is periodic with the period $ T = 2 \pi $.

\begin{figure}[thp]
\pgfplotsset{compat = newest, width = 5cm, height = 5.5cm, xmin=1e-3,xmax=1,ymin = 1e-16, ymax =1,ytick={1e-16,1e-12,1e-8,1e-4,1},xlabel={$h$},cycle list name=mylist}

    \begin{minipage}{0.4\textwidth}
    \centering
    
    \begin{tikzpicture}
    \begin{loglogaxis}
        \foreach \x in {1,...,6} {
        \addplot table[y index = \x, only marks] {data/Kepler/errorsCERK5explicit.dat};
    }
    \addplot[black,mark = +] table [only marks] {data/Kepler/errorsRK6.dat};
    \addplot[black,domain=1e-3:1] {0.1*x^6};
    \end{loglogaxis}
    \end{tikzpicture}
    
    (a) CERK+explicit iteration
    \end{minipage}
    \begin{minipage}{0.59\textwidth}
    \begin{tikzpicture}
    \begin{loglogaxis}
        \foreach \x in {1,...,6} {
        \addplot table[y index = \x, only marks] {data/Kepler/errorsCERK5semiimplicit.dat};
    }
    \addplot[black,mark = +] table [only marks] {data/Kepler/errorsRK6.dat};
    \addplot[black,domain=1e-3:1] {0.1*x^6};
    \end{loglogaxis}
    \end{tikzpicture}
    
    (b) CERK+semi-implicit iteration
    \end{minipage}\\[5pt]
    
    \begin{minipage}{0.4\textwidth}
    \centering
    
    \begin{tikzpicture}
    \begin{loglogaxis}
        \foreach \x in {1,...,6} {
        \addplot table[y index = \x, only marks] {data/Kepler/errorsEX3explicit.dat};
    }
    \addplot[black,mark = +] table [only marks] {data/Kepler/errorsRK6.dat};
    \addplot[black,dotted,thick,domain=1e-3:1] {x^4};
    \addplot[black,thick,densely dotted,domain=1e-3:1] {0.05*x^5};
    \addplot[black,domain=1e-3:1] {0.1*x^6};
    \end{loglogaxis}
    \end{tikzpicture}
    
    (c) extrapolation+explicit iteration
    \end{minipage}
    \begin{minipage}{0.59\textwidth}
    \begin{tikzpicture}
    \begin{loglogaxis}
        \foreach \x in {1,...,6} {
        \addplot table[y index = \x, only marks] {data/Kepler/errorsEX3semiimplicit.dat};
    }
    \addplot[black,mark = +] table [only marks] {data/Kepler/errorsRK6.dat};
    \addplot[black,dotted,thick,domain=1e-3:1] {x^4};
    \addplot[black,domain=1e-3:1] {0.1*x^6};
    \end{loglogaxis}
    \end{tikzpicture}
    
    (d) extrapolation+semi-implicit iteration
    \end{minipage}\\[5pt]
    
    \begin{minipage}{0.4\textwidth}
    \centering
    
    \begin{tikzpicture}
    \begin{loglogaxis}
        \foreach \x in {1,...,6} {
        \addplot table[y index = \x,only marks] {data/Kepler/errorsACEX2explicit.dat};
    }
    \addplot[black,mark = +] table [only marks] {data/Kepler/errorsRK6.dat};
    \addplot[black,dashed,domain=1e-3:1] {10*x^2};
    \addplot[black,densely dashed,domain=1e-3:1] {0.01*x^3};
    \addplot[black,dotted,thick,domain=1e-3:1] {x^4};
    \addplot[black,domain=1e-3:1] {0.1*x^6};
    \end{loglogaxis}
    \end{tikzpicture}
    
    (e) Euler+explicit iteration
    \end{minipage}
    \begin{minipage}{0.59\textwidth}
    
    \begin{tikzpicture}
    \begin{loglogaxis}[legend style={nodes={scale=0.9, transform shape},at={(1.46,1)},anchor=north,legend columns=1}]
        \foreach \x in {1,...,6} {
        \edef\temp{\noexpand\addlegendentry{Iteration: \x}}
        \addplot table[y index = \x, only marks] {data/Kepler/errorsACEX2semiimplicit.dat};
        \temp
    }
    \addplot[black,mark = +] table [only marks] {data/Kepler/errorsRK6.dat};
    \addlegendentry{base RK};
    \addplot[black,dashed,domain=1e-3:1] {10*x^2};
    \addlegendentry{second order};
    \addplot[black,densely dashed,domain=1e-3:1] coordinates{(1e-3,1.1)(1,1.1)};
    \addlegendentry{third order};
    \addplot[black,dotted,thick,domain=1e-3:1] {x^4};
    \addlegendentry{4th order};
    \addplot[black,domain=1e-3:1,densely dotted,thick] coordinates{(1e-3,1.1)(1,1.1)};
    \addlegendentry{5th order};
    \addplot[black,domain=1e-3:1] {0.1*x^6};
    \addlegendentry{6th order};
    \end{loglogaxis}
    \end{tikzpicture}
    
    (f) Euler+semi-implicit iteration
    \end{minipage}
    \caption{Relative errors of numerical solutions for the Kepler equation~\eqref{eq:Kepler} ($ e = 0.01$).}
    \label{fig:kep_errors}
\end{figure}

The numerical results are summarised in \cref{fig:kep_errors}. 
Note that, the skew-symmetric matrix function $ S $ is defined on $ \RR^4 \setminus \setI{ (0,0,y_3,y_4)}{y_3 , y_4 \in \RR} $ and not globally Lipshitz. 
Thus, \cref{thm:err_si_itr,thm:err_ex_itr} cannot be applied directly. 
However, since $S$ is locally Lipschitz, the accuracy of the numerical schemes should coincides with that in \cref{thm:err_si_itr,thm:err_ex_itr}. 
The numerical results of explicit iteration schemes are matched with this observation as shown in \cref{fig:kep_errors} (a), (c), and (e). 
However, the semi-implicit iteration schemes seem to achieve higher order than expected. 
These schemes turn out to achieve the order $ \min\{ p, q + 2k - 2 \} $ (see \cref{ap:proof_Kepler} for the proof). 
This is due to the special structure of the Kepler equation.

\begin{figure}[thp]
\pgfplotsset{compat = newest, width = 5cm, height = 4.5cm, xmin=-2,xmax=1,ymin = -1.5, ymax =1.5,xlabel={$y_1$},ylabel={$y_2$},cycle list name=mylist,restrict x to domain*=-2.1:1.1,restrict y to domain*=-1.6:1.6,layers/my layer set/.define layer set={main,foreground}{},set layers=my layer set}

    \begin{minipage}{0.4\textwidth}
    \centering
    
    \begin{tikzpicture}
    \begin{axis}
        \foreach \x in {1,...,6} {
        \edef\temp{\noexpand\addplot table[x index = 1, y index = 2, only marks] {data/Kepler/orbitCERK5explicit\x.dat}}
        \temp;
    }
    \addplot[black, on layer = foreground] table[x index = 1, y index = 2] {data/Kepler/exact.dat};
    \end{axis}
    \end{tikzpicture}
    
    (a) CERK+explicit iteration
    \end{minipage}
    \begin{minipage}{0.59\textwidth}
    \begin{tikzpicture}
    \begin{axis}
        \foreach \x in {1,...,6} {
        \edef\temp{\noexpand\addplot table[x index = 1, y index = 2, only marks] {data/Kepler/orbitCERK5semiimplicit\x.dat}}
        \temp;
    }
    \addplot[black, on layer = foreground] table [x index = 1, y index = 2] {data/Kepler/exact.dat};
    \end{axis}
    \end{tikzpicture}
    
    (b) CERK+semi-implicit iteration
    \end{minipage}\\[5pt]
    
    \begin{minipage}{0.4\textwidth}
    \centering
    
    \begin{tikzpicture}
    \begin{axis}
        \foreach \x in {1,...,6} {
        \edef\temp{\noexpand\addplot table[x index = 1, y index = 2, only marks] {data/Kepler/orbitEX3explicit\x.dat}}
        \temp;
    }
    \addplot[black, on layer = foreground] table [x index = 1, y index = 2] {data/Kepler/exact.dat};
    \end{axis}
    \end{tikzpicture}
    
    (c) extrapolation+explicit iteration
    \end{minipage}
    \begin{minipage}{0.59\textwidth}
    \begin{tikzpicture}
    \begin{axis}
        \foreach \x in {1,...,6} {
        \edef\temp{\noexpand\addplot table[x index = 1, y index = 2, only marks] {data/Kepler/orbitEX3semiimplicit\x.dat}}
        \temp;
    }
    \addplot[black, on layer = foreground] table [x index = 1, y index = 2] {data/Kepler/exact.dat};
    \end{axis}
    \end{tikzpicture}
    
    (d) extrapolation+semi-implicit iteration
    \end{minipage}\\[5pt]
    
    \begin{minipage}{0.4\textwidth}
    \centering
    
    \begin{tikzpicture}
    \begin{axis}
        \foreach \x in {1,...,6} {
        \edef\temp{\noexpand\addplot table[x index = 1, y index = 2, only marks] {data/Kepler/orbitACEX2explicit\x.dat}}
        \temp;
    }
    \addplot[black, on layer = foreground] table[x index = 1, y index = 2] {data/Kepler/exact.dat};
    \end{axis}
    \end{tikzpicture}
    
    (e) Euler+explicit iteration
    \end{minipage}
    \begin{minipage}{0.59\textwidth}
    
    \begin{tikzpicture}
    \begin{axis}[legend style={at={(1.4,1)},anchor=north,legend columns=1}]
        \foreach \x in {1,...,6} {
        \edef\tempa{\noexpand\addplot table[x index = 1, y index = 2, only marks] {data/Kepler/orbitACEX2semiimplicit\x.dat}}
        \edef\temp{\noexpand\addlegendentry{Iteration: \x}}
        \tempa;
        \temp
    }
    \addplot[black, on layer = foreground] table[x index = 1,y index = 2] {data/Kepler/exact.dat};
    \addlegendentry{exact sol.};
    \end{axis}
    \end{tikzpicture}
    
    (f) Euler+semi-implicit iteration
    \end{minipage}
    \caption{Numerical solutions for the last 1 period of 1024 periods of numerical experiments for the Kepler equation~\eqref{eq:Kepler} ($h = 2 \pi / 64$, $ e = 0.6$). The curves in black are the exact periodic solutions. }
    \label{fig:kep_orbits}
\end{figure}
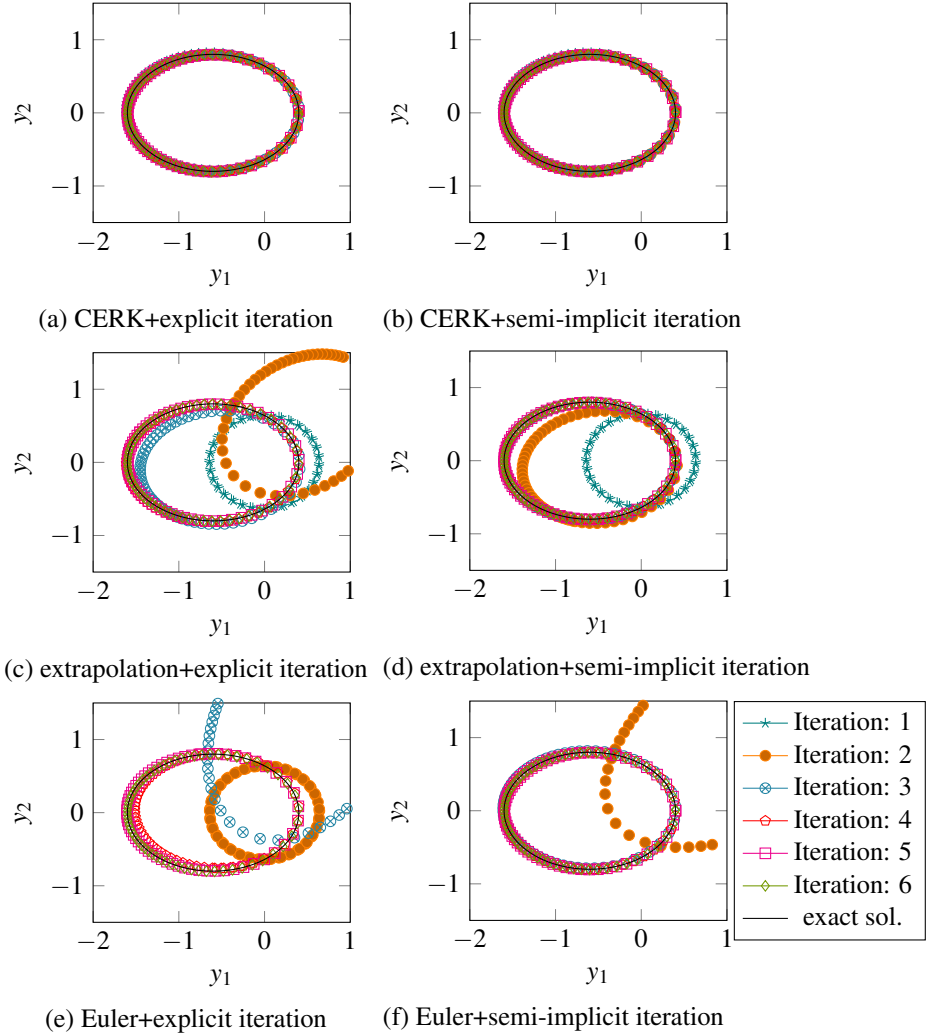

To confirm the long time behaviour of the proposed schemes, we conduct numerical experiments with $ T = 2 \pi \times 1024 $ and $ h = 2 \pi / 64$ ($ e = 0.6 $).  
\Cref{fig:kep_orbits} shows the numerical solutions for the last 1 period of the numerical experiments. 
The numerical solutions of all schemes do not diverge even in such a long run
(the solution in (e) and (f) with one iteration is also finite, but it is far from the exact solution and does not appear in the figure).

\subsection{Korteweg--de Vries equation}
\label{ne:KdV}

Finally, we consider the Korteweg--de Vries (KdV) equation 
\begin{equation}\label{eq:KdV}
    u_t + 6 u u_x + u_{xxx} = 0,
\end{equation}
which has the quadratic invariant $ \frac{1}{2} \int u^2 \, \rd x $. 
Here, $u$ is a dependent variable, $ t $ and $x$ are temporal and spatial independent variables (subscripts denote partial derivatives). 
For brevity, we deal with the periodic domain $ \Sb := \RR / L \ZZ $ and use the cnoidal wave solution
\begin{equation}\label{eq:cnoidal}
    u(t,x) = u_0 + 2 k^2 \kappa^2 \paren*{ \cn \paren*{ \kappa ( x - x_0 - ct ) | k} }^2,
\end{equation}
where $ c = 6 u_0 + 4 (2 k^2 -1) \kappa^2 $, $L = 4 K (k) / \kappa $, and $ T = \abs*{ \frac{L}{c \kappa} }$. 
In this section, we use the parameters $ k = \sqrt{0.1}$, $u_0 = 0$, and $ \kappa = 1 $
 ($L \approx 3.2248826974404383$, $ T \approx 1.007775842950137$). 

Then, we use the norm-preserving Fourier-spectral discretisation ($ \Delta x := L/d $, $ y_k (t) \approx u ( t, k \Delta x) $ ($k =  1, 2, \dots , d$))
\begin{equation}\label{eq:kdv_np}
\dot{y} = S(y) \nabla V(y), \quad V(y) = \frac{1}{2} \sum_{k=1}^d y_k^2 \Delta x, 
\end{equation}
where the skew-symmetric matrix function $ S $ is defined as 
\[ \paren*{ S(v) w }_k := - 2 \paren*{ v_k \delta_x w_k + \delta_x \paren*{ \paren*{ v_k w_k } } } - \delta_x^3 w_k. \]
Here, $ \delta_x $ denotes the Fourier-spectral difference operator (see, e.g. \cite{Fornberg1996}). 

Due to the stiffness of the ODE~\eqref{eq:kdv_np}, 
the direct application of the CERK method does not work well in this case. 
A good approach to deal with such a stiff ODE is to employ the exponential integrators (see, e.g. \cite{SL2019}). 
The ODE~\eqref{eq:kdv_np} can be rewritten as
\[ \dot{y} = A y + g (y), \qquad A = - \delta_x^3, \qquad  \paren*{ g (y) }_k = -2 \paren*{ y_k \delta_x y_k + \delta_x \paren*{ y_k^2 } }. \]
Then, we transform it by using $ \widetilde{y} (t):= e^{ - t A } y (t) $ and obtain
\[ \dot{ \widetilde{y} } (t) = e^{-tA} g \paren*{ e^{tA} \widetilde{y} }. \]
Since the transformed ODE is numerically easier to deal with, 
we apply the CERK method to it. 

The numerical results are summarised in \cref{fig:kdv_errors} ($ d = 16$), 
which are basically matched with \cref{tab:nm_ac}. 
However, in contrast to the previous two examples, 
there is a large difference between the explicit and semi-implicit iteration: 
the explicit iteration requires very small step sizes.

\begin{figure}[thp]
\pgfplotsset{compat = newest, width = 5cm, height = 5.5cm, xmin=1e-5,xmax=0.3,ymin = 1e-16, ymax =1,ytick={1e-16,1e-12,1e-8,1e-4,1},xlabel={$h$},cycle list name=mylist}

    \begin{minipage}{0.4\textwidth}
    \centering
    
    \begin{tikzpicture}
    \begin{loglogaxis}
        \foreach \x in {1,...,6} {
        \addplot table[y index = \x, only marks] {data/KdVtol20/errorsCExpRK5explicit.dat};
    }
    \addplot[black,mark = +] table [only marks] {data/KdVtol20/errorsRK6.dat};
    \addplot[black,domain=1e-4:0.3] {0.5*x^6};
    \end{loglogaxis}
    \end{tikzpicture}
    
    (a) CERK+explicit iteration
    \end{minipage}
    \begin{minipage}{0.59\textwidth}
    \begin{tikzpicture}
    \begin{loglogaxis}
        \foreach \x in {1,...,6} {
        \addplot table[y index = \x, only marks] {data/KdVtol20/errorsCExpRK5semiimplicit.dat};
    }
    \addplot[black,mark = +] table [only marks] {data/KdVtol20/errorsRK6.dat};
    \addplot[black,domain=1e-4:0.3] {0.5*x^6};
    \end{loglogaxis}
    \end{tikzpicture}
    
    (b) CERK+semi-implicit iteration
    \end{minipage}\\[5pt]
    
    \begin{minipage}{0.4\textwidth}
    \centering
    
    \begin{tikzpicture}
    \begin{loglogaxis}
        \foreach \x in {1,...,6} {
        \addplot table[y index = \x, only marks] {data/KdVtol20/errorsEX3explicit.dat};
    }
    \addplot[black,mark = +] table [only marks] {data/KdVtol20/errorsRK6.dat};
    \addplot[black,dotted,thick,domain=1e-4:0.3] {0.5*x^4};
    \addplot[black,densely dotted,thick,domain=1e-4:0.3] {0.5*x^5};
    \addplot[black,domain=1e-4:0.3] {0.5*x^6};
    \end{loglogaxis}
    \end{tikzpicture}
    
    (c) extrapolation+explicit iteration
    \end{minipage}
    \begin{minipage}{0.59\textwidth}
    \begin{tikzpicture}
    \begin{loglogaxis}
        \foreach \x in {1,...,6} {
        \addplot table[y index = \x, only marks] {data/KdVtol20/errorsEX3semiimplicit.dat};
    }
    \addplot[black,mark = +] table [only marks] {data/KdVtol20/errorsRK6.dat};
    \addplot[black,dotted,thick,domain=1e-4:0.3] {0.5*x^4};
    \addplot[black,densely dotted,thick,domain=1e-4:0.3] {0.5*x^5};
    \addplot[black,domain=1e-4:0.3] {0.5*x^6};
    \end{loglogaxis}
    \end{tikzpicture}
    
    (d) extrapolation+semi-implicit iteration
    \end{minipage}\\[5pt]
    
    \begin{minipage}{0.4\textwidth}
    \centering
    
    \begin{tikzpicture}
    \begin{loglogaxis}
        \foreach \x in {1,...,6} {
        \addplot table[y index = \x,only marks] {data/KdVtol20/errorsACEX2explicit.dat};
    }
    \addplot[black,mark = +] table [only marks] {data/KdVtol20/errorsRK6.dat};
    \addplot[black,dashed,domain=1e-4:0.3] {0.3*x^2};
    \addplot[black,densely dashed,domain=1e-4:0.3] {0.1*x^3};
    \addplot[black,dotted,thick,domain=1e-4:0.3] {0.5*x^4};
    \addplot[black,densely dotted,thick,domain=1e-4:0.3] {0.5*x^5};
    \addplot[black,domain=1e-4:0.3] {0.5*x^6};
    \end{loglogaxis}
    \end{tikzpicture}
    
    (e) Euler+explicit iteration
    \end{minipage}
    \begin{minipage}{0.59\textwidth}
    
    \begin{tikzpicture}
    \begin{loglogaxis}[legend style={nodes={scale=0.9, transform shape},at={(1.46,1)},anchor=north,legend columns=1}]
        \foreach \x in {1,...,6} {
        \edef\temp{\noexpand\addlegendentry{Iteration: \x}}
        \addplot table[y index = \x, only marks] {data/KdVtol20/errorsACEX2semiimplicit.dat};
        \temp
    }
    \addplot[black,mark = +] table [only marks] {data/KdVtol20/errorsRK6.dat};
    \addlegendentry{base RK};
    \addplot[black,dashed,domain=1e-4:0.3] {0.3*x^2};
    \addlegendentry{second order};
    \addplot[black,densely dashed,domain=1e-4:0.3] {0.1*x^3};
    \addlegendentry{third order};
    \addplot[black,dotted,thick,domain=1e-4:0.3] {0.5*x^4};
    \addlegendentry{4th order};
    \addplot[black,densely dotted,thick,domain=1e-4:0.3] {0.5*x^5};
    \addlegendentry{5th order};
    \addplot[black,domain=1e-4:0.3] {0.5*x^6};
    \addlegendentry{6th order};
    \end{loglogaxis}
    \end{tikzpicture}
    
    (f) Euler+semi-implicit iteration
    \end{minipage}
    \caption{Relative errors of numerical solutions for the KdV equation~\eqref{eq:KdV} ($ d = 16$). }
    \label{fig:kdv_errors}
\end{figure}
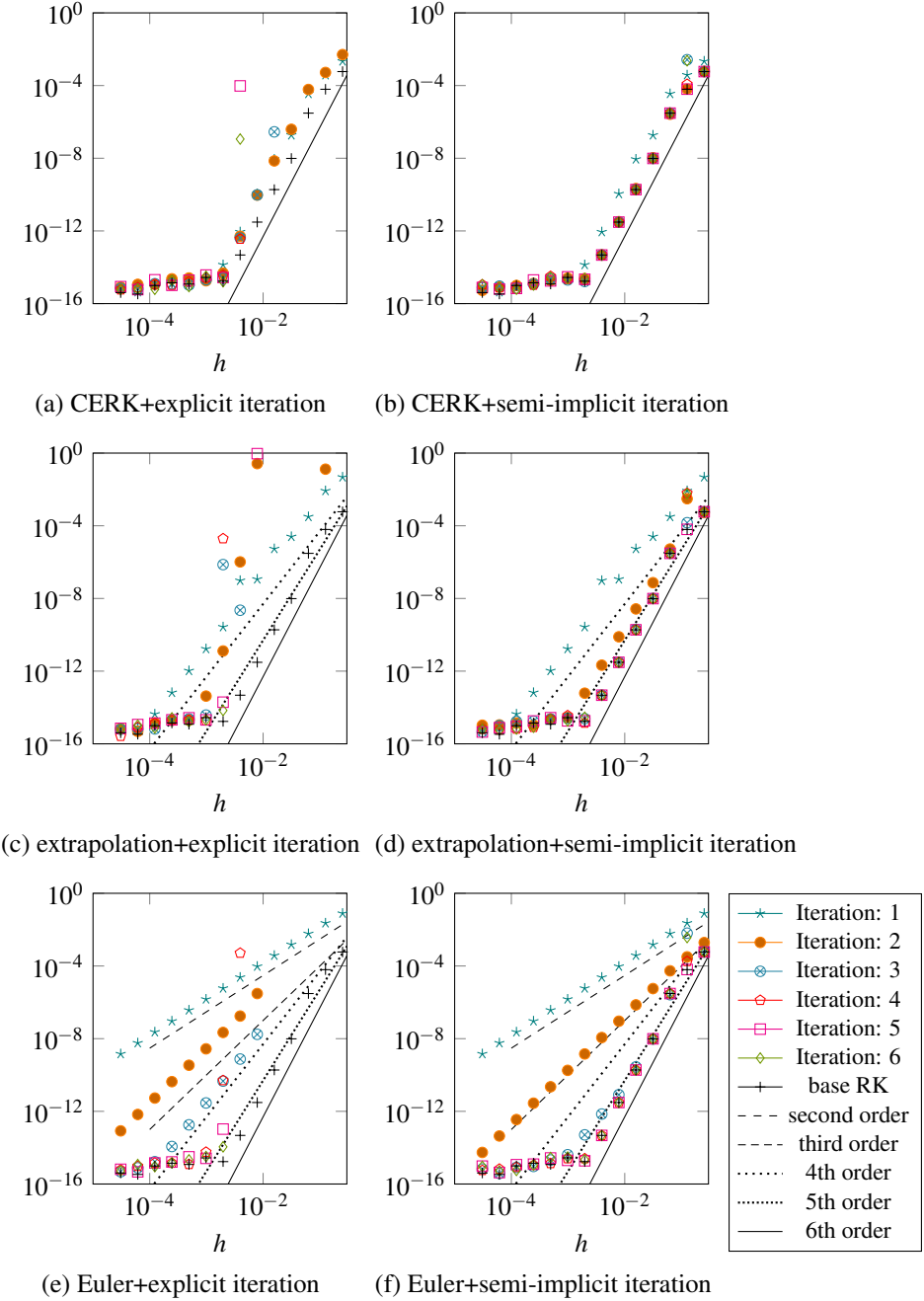

\bibliographystyle{plain}
\bibliography{reference}

\begin{appendix}
\metrics{2}{0.8}{2}{0.8}

\section{Proof of orders of PRK schemes in \cref{proposed:PRK}}
\label{ap:proof_PRK}

Here, we prove that the PRK scheme defined by~\eqref{eq:PRK_Gauss2} has order $4$. 
In view of \cref{thm:PRK_OC}, 
it is suffice to check that the condition $ \phi (\tau) = 1/ \tau ! $ holds for any tree in the following set
\[ \setE*{\raisebox{-3mm}{$
\black\tree{},
\tree{\vertex10},
\tree{\white\vertex10},
\tree{\vertex1{0.6}\vertex2{-0.6}},
\tree{\vertex1{0.6}\change\vertex2{-0.6}},
\tree{\white\vertex1{0.6}\vertex2{-0.6}},
\tree{\vertex10\vertex10},
\tree{\vertex10\change\vertex10},
\tree{\change\vertex10\vertex10},
\tree{\vertex1{0.875}\vertex20\vertex3{-0.875}},
\tree{\vertex1{0.875}\vertex20\change\vertex3{-0.875}},
\tree{\vertex1{0.875}\change\vertex20\vertex3{-0.875}},
\tree{\change\vertex1{0.875}\vertex20\vertex3{-0.875}},
\tree{\vertex1{0.6}\vertex10\vertex3{-0.6}},
\tree{\vertex1{0.6}\change\vertex10\change\vertex3{-0.6}},
\tree{\change\vertex1{0.6}\vertex10\change\vertex3{-0.6}},
\tree{\vertex1{0.6}\vertex10\change\vertex3{-0.6}},
\tree{\vertex1{0.6}\change\vertex10\vertex3{-0.6}},
\tree{\change\vertex1{0.6}\vertex10\vertex3{-0.6}},
\tree{\vertex10\vertex1{0.6}\vertex2{-0.6}},
\tree{\vertex10\vertex1{0.6}\change\vertex2{-0.6}},
\tree{\vertex10\change\vertex1{0.6}\vertex2{-0.6}},
\tree{\change\vertex10\vertex1{0.6}\vertex2{-0.6}},
\tree{\vertex10\vertex10\vertex10},
\tree{\vertex10\vertex10\change\vertex10},
\tree{\vertex10\change\vertex10\vertex10},
\tree{\change\vertex10\vertex10\vertex10}$}
} \]
Since $ c_i = \widehat{c}_i $ and $ \sum_{ j \in [s]} a_{ij} c_j = \sum_{ j \in [s] } \widehat{a}_{ij} \widehat{c}_j $ hold for $ i = 4 , 5 $ (indices of nonzero elements of $A$ and $b$), 
these conditions are satisfied except for that corresponding to \tree{\change\vertex10\vertex1{0.6}\vertex2{-0.6}} and \tree{\change\vertex10\vertex10\vertex10}, i.e. 
$ \sum_{i,j \in [s] } b_i \widehat{a}_{ij} \widehat{c}_j^2 = 1/12$ and $ \sum_{ i , j , k \in [s]} b_i \widehat{a}_{ij} \widehat{a}_{jk} \widehat{c}_k =1/24$. 
These conditions can also be verified by a simple calculation. 

We also prove that the PRK scheme defined by~\eqref{eq:PRK_433dic} has order $3$. 
In view of \cref{thm:PRK_OC}, we should confirm the conditions corresponding to 
\[ \setE*{\raisebox{-1.8mm}{$
\black\tree{},
\tree{\vertex10},
\tree{\white\vertex10},
\tree{\vertex1{0.6}\vertex2{-0.6}},
\tree{\vertex1{0.6}\change\vertex2{-0.6}},
\tree{\white\vertex1{0.6}\vertex2{-0.6}},
\tree{\vertex10\vertex10},
\tree{\vertex10\change\vertex10},
\tree{\change\vertex10\vertex10}$}
} \]
Thanks to $ c_i = \widehat{c}_i $, these conditions are satisfied except for that corresponding to \tree{\change\vertex10\vertex10}, i.e. $ \sum_{ i, j \in [s]} b_i \widehat{a}_{ij} \widehat{c}_j = 1/6 $ which is equivalent to~\eqref{eq:PRK_433dic_cond}. 

\section{Numerical results}

Here, we show the figure omitted in \cref{ne:Euler}. 
The relative errors of the invariant $H (y) = (y_1^2 + y_2^2 + y_3^2 )/2$ of the Euler equation are summarised in \cref{fig:eul_H}. 

\begin{figure}[ht]
    \centering
    \includegraphics[scale=0.9]{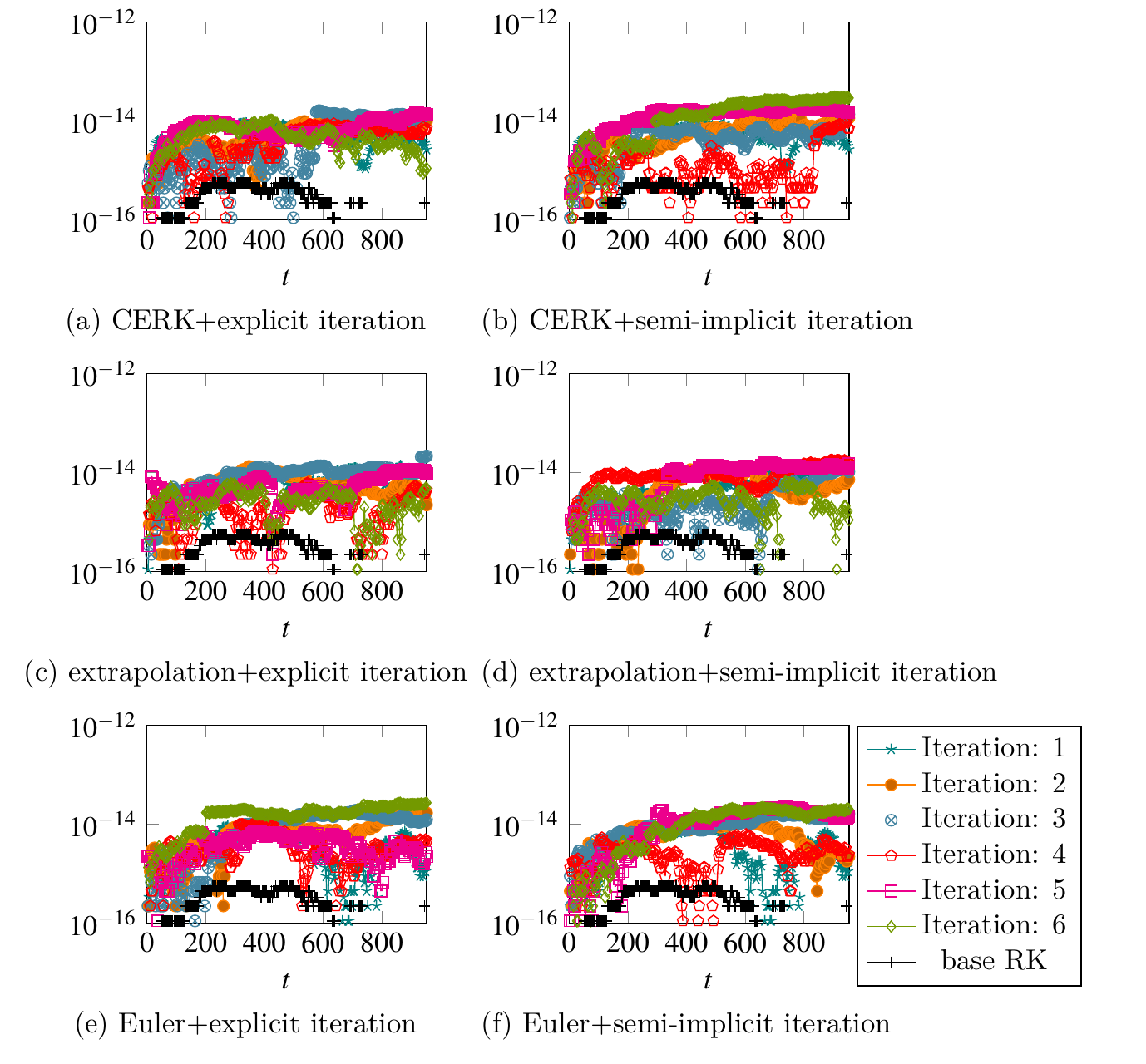}
    \vspace{-10pt}
    
    \caption{Evolution of relative errors of the invariant $H$ of the Euler equation ($ h = 4K(0.51)/128 \approx 0.0582 $). Markers are for every 64 points.}
    \label{fig:eul_H}
\end{figure}

\section{Accuracy of the proposed schemes for the Kepler equation}
\label{ap:proof_Kepler}

As shown in \cref{fig:kep_errors}, the semi-implicit iteration scheme has higher orders than expected. 
In this section, we prove the order for the Kepler case. 

\begin{proposition}\label{prop:err_si_itr_kep}
Assume the following conditions: 
\begin{description}
    \item[(A1)] $ Y^{(0)}_i $ satisfies $ \norm[\big]{ Y^{(0)}_i - y( c_i h ) } \le C h^{q} $ for each $ i \in [s] ${\em ;} 
    \item[(A2)] the exact solution $ y $ satisfies $ \underline{R} \le \sqrt{ \paren*{ y_1 (t) }^2 + \paren*{ y_2 (t) }^2 } \le \overline{R} $ for any $ t \in [0,h] $ for some $\underline{R}, \overline{R} > 0${\em ;}
    \item[(A3)] the Runge--Kutta method corresponding to $ A, b $ has order $p${\em .}
\end{description}
Then, the numerical solution $ y^{(k)}_1 $ of the semi-implicit iterative scheme for the Kepler equation~\eqref{eq:Kepler} satisfies 
\begin{equation}\label{eq:acc_itr_kep}
    \norm*{ y^{(k)}_1 - y (h) } \le C' h^{ \min\{ p , q + 2k-2\} +1 }
\end{equation}
for sufficiently small $h > 0$, i.e. the scheme with $k$ iteration have order $ \min\{ p , q + 2k-2\} $. 
Here, $ C' $ is a constant depending only on $k$, $ \overline{R}$,  $\underline{R} $, $A$, $b$ and the constant $C$ in (A1).   
\end{proposition}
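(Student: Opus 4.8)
The plan is to follow the proof of \cref{thm:err_si_itr} but to exploit the partitioned structure of the Kepler equation~\eqref{eq:Kepler} so that each iteration gains two powers of $h$ instead of one. Write $y = (p,v)$ with position $p = (y_1,y_2)$ and velocity $v = (y_3,y_4)$, and set $g(p) := (p_1^2+p_2^2)^{-3/2}$, so that \eqref{eq:Kepler} reads $\dot p = v$, $\dot v = -g(p)\,p$: the matrix $S$ enters only through $g(p)$, and only in the velocity block. Exactly as in the proof of \cref{thm:err_si_itr}, by (A1) choose a smooth interpolant $y^{(0)} = (p^{(0)},v^{(0)})$ with $\sup_{[0,h]}\norm{y^{(0)}(t)-y(t)} \le Ch^{q}$ and derivatives bounded uniformly in $h$, and regard the $k$-iteration semi-implicit scheme as the base Runge--Kutta method applied to the cascade
\begin{equation}
  \dot p^{(j)} = v^{(j)}, \qquad \dot v^{(j)} = -g\bigl(p^{(j-1)}\bigr)\,p^{(j)} \qquad (j=1,\dots,k), \qquad y^{(j)}(0) = y_0.
\end{equation}
By (A3) it then suffices to prove the a priori estimate $\norm{y^{(k)}(h) - y(h)} \le C^{(k)} h^{q+2k-1}$ for the exact cascade solution.

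I would prove this by induction on $j$, establishing simultaneously that $\norm{e_v^{(j)}}_{L^\infty[0,h]} \le C_v^{(j)} h^{q+2j-1}$ and $\norm{e_p^{(j)}}_{L^\infty[0,h]} \le C_p^{(j)} h^{q+2j}$, where $e_p^{(j)} := p^{(j)}-p$ and $e_v^{(j)} := v^{(j)}-v$; the case $j=0$ is (A1). For the inductive step, subtracting \eqref{eq:Kepler} from the cascade gives $\dot e_p^{(j)} = e_v^{(j)}$ and $\dot e_v^{(j)} = -g(p^{(j-1)})\,e_p^{(j)} - \bigl(g(p^{(j-1)})-g(p)\bigr)p$, both with zero initial value. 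By (A2) the exact position $p$ stays in the annulus $\underline R \le \norm{p(t)} \le \overline R$ on which $g$ is smooth; for $h$ small the inductive bound on $e_p^{(j-1)}$ keeps $p^{(j-1)}$ in a fixed slightly larger annulus, so $g(p^{(j-1)})$ is bounded and $\norm{g(p^{(j-1)}) - g(p)} \le L_g\norm{e_p^{(j-1)}} \le L_g C_p^{(j-1)} h^{q+2j-2}$. Integrating the equation for $e_v^{(j)}$ from $0$ and using $\norm{e_p^{(j)}}_\infty \le h\norm{e_v^{(j)}}_\infty$ yields $\norm{e_v^{(j)}}_\infty \le C h^2\norm{e_v^{(j)}}_\infty + C h^{q+2j-1}$, hence $\norm{e_v^{(j)}}_\infty \le 2C h^{q+2j-1}$ once $h$ is small; integrating $\dot e_p^{(j)} = e_v^{(j)}$ then gives $\norm{e_p^{(j)}}_\infty \le h\norm{e_v^{(j)}}_\infty \le 2C h^{q+2j}$, closing the induction. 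Since the velocity component dominates, $\norm{y^{(k)}(h)-y(h)} \le \norm{e_v^{(k)}(h)} + \norm{e_p^{(k)}(h)} \le C^{(k)} h^{q+2k-1}$, with all constants depending only on $k$, $\underline R$, $\overline R$ and $C$.

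Combining this with the order-$p$ local truncation error of the base Runge--Kutta method applied to the cascade --- which is smooth with uniformly bounded data for $h$ small, so the standard estimate gives $\norm{y_1^{(k)} - y^{(k)}(h)} \le C h^{p+1}$ --- exactly as at the end of the proof of \cref{thm:err_si_itr}, one obtains $\norm{y_1^{(k)} - y(h)} = O(h^{p+1}) + O(h^{q+2k-1}) = O\bigl(h^{\min\{p,\,q+2k-2\}+1}\bigr)$, which is the claim. The main obstacle I anticipate is the bookkeeping around the local (rather than global) Lipschitz continuity and smoothness of $g$ near the singularity $p=0$: one must check that, for $h$ below an explicit threshold depending only on $k$, $\underline R$, $\overline R$ and $C$, every perturbed position $p^{(j-1)}$ that appears stays in a region where $g$ and all the derivatives needed for the Runge--Kutta truncation error are uniformly bounded. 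This bootstrap is benign precisely because $g$ is always evaluated at the already-controlled previous level $p^{(j-1)}$, never at $p^{(j)}$ itself, so no circular dependence arises.
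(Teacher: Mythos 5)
Your proposal is correct and follows essentially the same route as the paper: reduce to the cascade system, observe that the nonlinearity $(p_1^2+p_2^2)^{-3/2}$ is evaluated only at the previous iterate's \emph{position}, and show by induction that the position error gains $h^2$ per iteration while the velocity error lags by one power of $h$, yielding $O(h^{q+2k-1})$ overall. The only cosmetic difference is that you run a coupled first-order induction on $(e_p^{(j)},e_v^{(j)})$ using $\norm{e_p^{(j)}}_\infty\le h\norm{e_v^{(j)}}_\infty$, whereas the paper integrates the equivalent second-order equation $\ddot z^{(j)}=-z^{(j)}/\norm{z^{(j-1)}}^3$ twice; the estimates and the handling of the annulus $\underline R\le\norm{z}\le\overline R$ are the same.
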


\begin{proof}
Similar to the proof of \cref{thm:err_si_itr}, it is sufficient to prove $ \norm*{ y^{(k)} (h) - y (h) } \le C' h^{q+2k-1} $ for the solution $ y^{(k)} $'s of \eqref{eq:acc_system}. 

For each $ j \in [k] $, we see 
\begin{align}\label{eq:kep_sub}
\dot{y}^{(j)}_1 &= y^{(j)}_3, &
\dot{y}^{(j)}_2 &= y^{(j)}_4, &
\dot{y}^{(j)}_3 &= - \frac{1}{ \paren*{ r^{(j-1)} }^3 } y^{(j)}_1, &
\dot{y}^{(j)}_4 &= - \frac{1}{ \paren*{ r^{(j-1)} }^3 } y^{(j)}_2,
\end{align}
where $ r^{(j-1)} = \sqrt{ \paren*{ y^{(j-1)}_1}^2 + \paren*{ y^{(j-1)}_2}^2 } $. 
By introducing $ z^{(j)} = \begin{pmatrix} y^{(j)}_1 & y^{(j)}_2 \end{pmatrix}^{\trans} $ and $ z = \begin{pmatrix} y_1 & y_2 \end{pmatrix}^{\trans} $ for brevity, we see that 
the equation above can be rewritten as
\begin{equation}
    \ddot{ z }^{(j)} = - \frac{1}{ \norm*{ z^{(j-1)} }^3 } z^{(j)}. 
\end{equation}
Then, we prove $ \sup_{t \in [0,h] } \norm*{ z^{(j)} (t) - z(t) } \le C^{(j)} h^{q + 2 j } $ by induction, where 
\begin{align}
    C^{(j)} &= 2 \overline{R} \frac{ \paren*{\overline{R}^{(j-1)} }^2 + \overline{R}^{(j-1)} \overline{R} + \overline{R}^2 }{  \paren*{ \underline{R}^{(j-1)} \underline{R} }^3 } C^{(j-1)}, &
    C^{(0)} &= C, \qquad 
    \overline{R}^{(0)} = \overline{R},\qquad
    \underline{R}^{(0)} = \underline{R},\\
    \overline{R}^{(j)} &= \sup_{t \in [0,h]} \norm*{ z^{(j)} (t) } \le \overline{R} + C^{(j)} h^{q+2j},&
    \underline{R}^{(j)} &= \inf_{t \in [0,h]} \norm*{ z^{(j)} (t) } \ge \underline{R} - C^{(j)} h^{q+2j}
\end{align}
(we assume $h$ is sufficiently small so that $ \paren*{\underline{R}^{(j-1)}}^3 > 2 h^2 $ and $ \underline{R}^{(j)} > 0 $ hold).
To this end, we see
\begin{align}
    \sup_{ t \in [0,h] } \norm*{ z^{(j)} (t) - z(t) }
    &= \sup_{ t \in [0,h] } \norm*{ \int^t_0 \int^{\tau}_0 \paren*{ - \frac{1}{\norm*{ z^{(j-1)} (\sigma) }^3 } z^{(j)} (\sigma) + \frac{1}{\norm*{ z(\sigma) }^3 } z (\sigma)   } \rd \sigma \rd \tau } \\
    &\le \sup_{ t \in [0,h] } \norm*{ \int^t_0 \int^{\tau}_0 \paren*{ - \frac{1}{\norm*{ z^{(j-1)} (\sigma) }^3 } z^{(j)} (\sigma) + \frac{1}{\norm*{ z^{(j-1)}(\sigma) }^3 } z (\sigma)   } \rd \sigma \rd \tau } \\
    &\qquad + \sup_{ t \in [0,h] } \norm*{ \int^t_0 \int^{\tau}_0 \paren*{ - \frac{1}{\norm*{ z^{(j-1)} (\sigma) }^3 } z (\sigma) + \frac{1}{\norm*{ z(\sigma) }^3 } z (\sigma) } \rd \sigma \rd \tau } \\
    &\le \frac{ h^2 }{\paren*{ \underline{R}^{(j-1)} }^3 } \sup_{ t \in [0,h] } \norm*{ z^{(j)} (t) - z(t) }\\
    &\qquad + h^2 \overline{R} \frac{  \paren*{\overline{R}^{(j-1)} }^2 + \overline{R}^{(j-1)} \overline{R} + \overline{R}^2  }{ \paren*{ \underline{R}^{(j-1)} \underline{R} }^3 } \sup_{ t \in [0,h] } \norm*{ z^{(j-1)} (t) - z(t) }, 
\end{align}
which yields
\begin{equation}
    \sup_{ t \in [0,h] } \norm*{ z^{(j)} (t) - z(t) } \le h^2 \frac{ \overline{R} \paren*{ \paren*{\overline{R}^{(j-1)} }^2 + \overline{R}^{(j-1)} \overline{R} + \overline{R}^2 } }{ \underline{R}^3 \paren*{ \paren*{ \underline{R}^{(j-1)} }^3 - h^2 } } \sup_{ t \in [0,h] } \norm*{ z^{(j-1)} (t) - z(t) }. 
\end{equation}
This inequality implies $ \sup_{t \in [0,h] } \norm*{ z^{(j)} (t) - z(t) } \le C^{(j)} h^{q + 2 j } $ by the induction assumption and $ \paren*{\underline{R}^{(j-1)}}^3 > 2 h^2 $. 

Finally, by using \eqref{eq:kep_sub}, we see 
\begin{align}
    \sup_{t \in [0,h]} \norm*{ y^{(k)} (t) - y(t) }^2 
    &\le \sup_{ t \in [0,h] } \norm*{ z^{(k)} (t) - z(t) }^2\\
    &\qquad + \sup_{ t \in [0,h] } \norm*{ \int^t_0 \paren*{ - \frac{1}{\norm*{ z^{(k-1)} (\tau) }^3 } z^{(k)} (\tau) + \frac{1}{\norm*{z(\tau)}^3} z(\tau) } \rd \tau }^2 \\
    &\le \paren*{ C^{(k)} h^{q+2k} }^2 + \paren*{ \frac{h}{\paren*{ \underline{R}^{(k-1)} }^3} C^{(k)} h^{q+2k} + \frac{h}{2} C^{(k)} h^{q+2(k-1)} }^2,
\end{align}
which implies $ \norm*{ y^{(k)} (h) - y (h) } \le C' h^{q+2k-1} $. 
\end{proof}

\end{appendix}
\end{document}